\newtheorem{remark}{Remark}
\newtheorem{theorem}[remark]{Theorem}
\newtheorem{proposition}[remark]{Proposition}
\newtheorem{corollary}[remark]{Corollary}
\newcommand{\pd}{\operatorname{pd}}
\newcommand{\ter}{\operatorname{ter}}
\providecommand{\keywords}[1]{\noindent\textbf{\textit{Keywords:}} #1}
\title{On the $k$-partition dimension of graphs}
\author{Alejandro Estrada-Moreno
\\
{\small IN3--Computer Science Department,}\\
{\small Open University of Catalonia, Av. Carl Friedrich Gauss 5, 08860 Barcelona, Spain.}
\\{\small aestradamo\@@uoc.edu}
}
\begin{document}
\maketitle

\begin{abstract}
\noindent As a generalization of the concept of the partition dimension of a graph, this article introduces the notion of the $k$-partition dimension. Given a nontrivial connected graph $G=(V,E)$, a partition $\Pi$ of $V$ is said to be a $k$-partition generator of $G$ if any pair of different vertices $u,v\in V$ is distinguished by at least $k$ vertex sets of $\Pi$, \emph{i.e}., there exist at least $k$ vertex sets $S_1,\ldots,S_k\in\Pi$ such that $d(u,S_i)\ne d(v,S_i)$ for every $i\in\{1,\ldots,k\}$. A $k$-partition generator of $G$ with minimum cardinality among all their $k$-partition generators is called a $k$-partition basis of $G$ and its cardinality the $k$-partition dimension of $G$. A nontrivial connected graph $G$ is $k$-partition dimensional if $k$ is the largest integer such that $G$ has a $k$-partition basis. We give a necessary and sufficient condition for a graph to be $r$-partition dimensional and we obtain several results on the $k$-partition dimension for $k\in\{1,\ldots,r\}$.
\end{abstract}

\keywords{$k$-partition dimension; $k$-metric dimension; partition dimension; metric dimension.}

\section{Introduction}
The metric dimension of a metric space was introduced in 1953 by Blumenthal \cite{Blumenthal1953}. However, its study was not considered until 20 years later, when it was applied to the distance between vertices in graphs \cite{Harary1976,Slater1975}. The principal motivation of introducing the concept of metric dimension in graphs was the problem of uniquely determining the location of an intruder in a network. The metric generators were called locating sets in \cite{Slater1975}, while in \cite{Harary1976} they were called resolving sets. The terminology of metric generators for the case of graphs was recently introduced by Seb\"{o} and Tannier in \cite{Sebo2004}, considering that both locating sets and resolving sets are just metric generators for the standard metric defined in graphs.

The literature about the metric generators for graphs shows its highly significant potential to be used for solving a representative number of real life problems, which have been described in several works. For instance, they have been frequently used in graph theory, chemistry, biology, robotics and many other disciplines \cite{Khuller1996,Chartrand2000,Melter1984,Manuel2008}. It was not until 2013 that the theory of metric dimension was developed further for general metric spaces by Bau and Beardon \cite{Sheng2013}.  Again in the context of graph theory, the notion of a $k$-metric dimension was introduced in \cite{Estrada-Moreno2013,Estrada-Moreno2014b,Estrada-Moreno2014,Estrada-Moreno2013corona}, $k$-metric generators and $k$-metric basis, where $k$ is any positive integer and particularly $k=1$ corresponds to the original theory of metric dimension. However, previously this concept had been introduced for the case $k=2$ in \cite{Hernando2008}, where $2$-metric generator of a graph were called fault-tolerant. The idea of the $k$-metric dimension in the context of general metric spaces was studied further in \cite{Beardon-RodriguezVelazquez2016}, although some additional results were also given in the context of graph theory.

The concept of a metric generator of a graph $G$ was not only generalized for the standard metric $d_G(x,y)$, i.e, the length of a shortest path between $x$ and $y$ in $G$, but also for a more general metric defined as $d_{G,t}(x,y)=\min\{d_G(x,y),t\}$, where $t$ is a positive integer. It can be noted that if $t$ is at least the diameter of $G$, then the metric $d_{G,t}$ is equivalent to $d_G$. Articles made in this sense we can mention \cite{Estrada-Moreno2014a,Estrada-Moreno2016k-metric,Estrada-MorenoYeroRodriguez-Velazquez2016, Fernau2018adjacency,Fernau2014Notions} and the Ph.D. thesis \cite{ThesisAlejandro}. Particularly, in these last works the $k$-metric generators of graphs with metric $d_{G,2}$ were called as $k$-adjacency generators.

On the other hand, in order to gain more insight into the metric properties of graphs, several variations of $1$-metric generators have been introduced and studied. Such variations have become more or less known and popular in connection to their applicability or according to the number of challenge that have arisen from them. Among them we could remark  resolving dominating sets \cite{Brigham2003}, weak total resolving sets \cite{Casel2016, Javaid2014Weak}, independent resolving sets \cite{Chartrand2000a}, local metric sets \cite{Barragan-Ramirez2017Simultaneous, Barragan-Ramirez2016local, Okamoto2010}, strong resolving sets \cite{EstrGarcRamRodr2015, Kuziak2013, Oellermann2007}, simultaneous metric dimension \cite{Barragan-Ramirez2017Simultaneous, EstrGarcRamRodr2015, Ramirez_Estrada_Rodriguez_2015, Ramirez-Cruz2018, Ramirez-Cruz-Rodriguez-Velazquez_2014}, strong resolving partitions \cite{GonzalezYero2013, Yero2015} and resolving partitions \cite{Chartrand1998, Chartrand2000b, Fehr2006, Gonzalez2014, RodriguezVelazquez2016Corona, RodriguezVelazquez2014}. A generalization of this last variation will be the object of study of this article.

The paper is structured as follows. In Section \ref{sectPrelim} the main concepts are introduced while Section \ref{sec_k_PartitionDimensional} is devoted to the problem of finding the largest integer $k$ for which there exists a $k$-partition generator of $G$. Particularly, we determine that a graph is $k$-metric dimensional graph if and only if is $k$-partition dimensional, and consequently, we give the previous results for $k$-metric dimensional graphs in terms of the $k$-partition dimensional graphs. Section \ref{sec_k_PartitionDimension} presents some general results related with the computation of $k$-partition dimension as well as some tight bounds on $k$-partition dimension are given. Finally, an upper bound on $k$-partition dimension of trees is presented.

\section{Terminology and basic tools}\label{sectPrelim}

From now on we consider simple and nontrivial connected graphs $G=(V,E)$. It is said that a vertex $v\in V$ \emph{distinguishes} two different vertices $x,y\in V$, if $d_G(v,x)\ne d_G(v,y)$. A set $S\subseteq V$ is said to be a \emph{$k$-metric generator} of $G$ if and only if any pair of different vertices of $G$ is distinguished by at least $k$ elements of $S$. A $k$-metric generator of $G$ with the minimum cardinality among all its $k$-metric generators is a \emph{$k$-metric basis} of $G$. The cardinality of any $k$-metric basis of $G$ is its \emph{$k$-metric dimension}, which will be denoted by $\dim_k(G)$.

Likewise, it is said that a vertex set $S\subseteq V$ distinguishes two different vertices $x,y\in V$, if $d_G(x, S)\ne d_G(y, S)$, where $d_G(a,S)$ is defined as $\displaystyle\min_{v\in S}\{d_G(a,v)\}$. A partition $\Pi$ of $V$ is said to be a \emph{$k$-partition generator} of $G$ if and only if any pair of different vertices of $G$ is distinguished by at least $k$ vertex sets of $\Pi$, {\em i.e.}, for any pair of different vertices $u,v\in V$, there exist at least $k$ vertex sets $S_1,S_2,\ldots,S_k\in \Pi$ such that
\begin{equation}\label{conditionDistinguish}
d_G(u,S_i)\ne d_G(v,S_i),\; \text{for every}\; i\in \{1,\ldots,k\}.
\end{equation}
A $k$-partition generator of the minimum cardinality of G is called a \emph{$k$-partition basis} and its cardinality the \emph{$k$-partition dimension} of $G$, which will be denoted by $\pd_{k}(G)$.

It can be noted that if $k>1$, then every $k$-partition generator $\Pi$ of $G$ is also a $(k-1)$-partition generator. Moreover, $1$-partition generators and $2$-partition generators are resolving partitions and fault-tolerance resolving partitions defined in \cite{Chartrand2000b} and \cite{Salman2009}, respectively. It can be observed that the property of a given partition $\Pi$ of $V$ to be a $k$-partition generator of $G$, where $k\in\{1,2\}$, reduces to checking Condition \eqref{conditionDistinguish} only for those pairs of different vertices $u,v\in V$ in a same vertex set $S_i\in\Pi$. Indeed, $d(u,S_i)=0$ for every vertex $u\in S_i$ but $d(u,S_j)\ne 0$ for $i\ne j$. Hence it follows that if $x\in S_i$ and $y\in S_j$ such that $i\ne j$, then $x,y$ are distinguished by at least $S_i$ and $S_j$. However, if $k\ge 3$ it is necessary to check Condition \eqref{conditionDistinguish} for every pair of different vertices of the graph. In general, for any pair of vertices $u,v\in V, u\ne v$ that belongs to a same vertex set $S_i\in\Pi$ there must be at least other $k$ vertex sets in $\Pi$ distinguishing it. On the other hand, for any pair of vertices $u,v\in V,u\ne v$ such that $u\in S_i$, $v\in S_j$ and $S_i,S_j\in\Pi, S_i\ne S_j$, there must be at least other $k-2$ vertex sets in $\Pi$ distinguishing it.

Throughout the paper, we will use the notation $K_n$, $K_{r,n-r}$, $C_n$ and $P_n$ for complete graphs, complete bipartite graphs, cycle graphs and path graphs of order $n$, respectively.

We use the notation $u\sim v$ if $u$ and $v$ are adjacent vertices and $G \cong H$ if $G$ and $H$ are isomorphic graphs. For a vertex $v$ of a graph $G$, $N_G(v)$ will denote the set of neighbours or \emph{open neighbourhood} of $v$ in $G$, \emph{i.e.} $N_G(v)=\{u \in V(G):\; u \sim v\}$. If it is clear from the context, we will use the notation $N(v)$ instead of $N_G(v)$. The \emph{closed neighbourhood} of $v$ will be denoted by $N[v]=N(v)\cup \{v\}$. Two vertices $x,y$ are called \emph{twins} if $N(x)=N(y)$ or $N[x]=N[y]$. The binary relation ``being twin'' on $V$ is an equivalence relation, and as consequence, it defines the twin equivalence classes.

For the remainder of the paper, definitions will be introduced whenever a concept is needed.

\section{On $k$-partition dimensional graphs}\label{sec_k_PartitionDimensional}

In this section we discuss a natural problem in the study of the $k$-partition dimension of a graph $G$ which consists of finding the largest integer $k$ such that there exists a $k$-partition generator of $G$. We say that a connected graph $G$ is \emph{$k$-partition dimensional} if $k$ is the largest integer such that there exists a $k$-partition basis of $G$.

Given a graph $G$ and two different vertices $x,y\in V(G)$, we denote by $\mathcal{D}_G(x,y)$ the set of vertices that distinguish the pair  $x,y$, \emph{i.e.},

$$\mathcal{D}_G(x,y)=\{z\in V:d(z,x)\ne d(z,y)\}.$$

It can also be noted that for any two different vertices $x,y\in V$ we have that $x,y\in \mathcal{D}_G(x,y)$ and vertices $x,y$ belong to the same twin equivalence class of $G$ if and only if $\mathcal{D}_G(x,y)=\{x,y\}$. With this definition in mind, we show the following proposition which allows us to prove Theorem~\ref{theokPartitionDimensional}, which is the main result in this section.

\begin{proposition}
Let $G=(V,E)$ be a nontrivial connected graph and let $\Pi$ be a partition of $V$. For any vertex set $S\in\Pi$ that distinguishes a pair of different vertices $x,y\in V$, there exists a vertex $z\in S$ that distinguishes $x,y$. 
\end{proposition}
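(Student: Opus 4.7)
The plan is to argue by a direct constructive choice of the witness $z\in S$, using only the definition of $d_G(a,S)$ as the minimum of $d_G(a,v)$ over $v\in S$.

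First I would reduce to a favored orientation. Since $S$ distinguishes $x$ and $y$, we have $d_G(x,S)\ne d_G(y,S)$, so without loss of generality assume $d_G(x,S)<d_G(y,S)$. The asymmetry between $x$ and $y$ in what follows is the reason this reduction matters.

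Next I would pick $z\in S$ to be any vertex attaining the minimum in $d_G(x,S)$; such a $z$ exists because $S$ is a nonempty finite set. Then by choice, $d_G(z,x)=d_G(x,S)$, while by definition of the minimum, $d_G(z,y)\ge d_G(y,S)$. Chaining these with the assumed inequality gives
\[
d_G(z,x)=d_G(x,S)<d_G(y,S)\le d_G(z,y),
\]
so $d_G(z,x)\ne d_G(z,y)$ and hence $z$ distinguishes $x$ and $y$.

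There is no real obstacle here: the argument is essentially a one-line consequence of the fact that distance from a set is realized at some member of the set. The only point that requires a moment of care is the initial symmetry-breaking step, because if one instead picked the vertex realizing $d_G(y,S)$ (the larger value), the inequality would not close; one must pick the minimizer on the side with the strictly smaller set-distance.
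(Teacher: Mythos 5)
Your proof is correct and rests on the same underlying fact as the paper's: the distance from a vertex to a set is attained at some member of the set, and the vertex realizing the strictly smaller of the two set-distances is forced to distinguish the pair, since $d_G(z,x)=d_G(x,S)<d_G(y,S)\le d_G(z,y)$. The paper arrives at this same witness only after a longer case analysis (first a vertex realizing both minima simultaneously, then the minimizers for $x$, and finally those for $y$), so your up-front symmetry-breaking reduction is a genuine streamlining of the same argument rather than a different route; every step checks out.
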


\begin{proof}
Suppose that there exists a $z\in S$ such $d(x,z)=d(x,S)$ and $d(y,z)=d(y,S)$, then since $S$ distinguishes $x,y$ it follows that $d(x,z)=d(x,S)\ne d(y,S)=d(y,z)$, which leads $z$ distinguishes $x,y$. Otherwise, we take a vertex $x'\in S$ such that $d(x,x')=d(x,S)$. If $x'$ distinguishes $x,y$, we are done. Suppose that for any $x'\in S$ such that $d(x,x')=d(x,S)$ we have that $x'$ does not distinguish $x,y$, \textit{i.e.}, $d(x,x')=d(y,x')$. Thus, for any $y'\in S$ such that $d(y,y')=d(y,S)$ it follows that $d(x,y')\ge d(x,S)=d(x,x')=d(y,x')\ge d(y,S)=d(y,y')$. Since $S$ distinguishes $x,y$, we deduce that $d(x,S)>d(y,S)$, and as a consequence, $y'$ distinguishes $x,y$.
\end{proof}

\begin{corollary}\label{remarkDPI}
Let $G=(V,E)$ be a nontrivial connected graph and let $\Pi$ be a $k$-partition generator of $G$. For any pair of different vertices $x,y\in V$ and all vertex sets $S_1,S_2,\ldots,S_r\in \Pi$ that distinguish it, we have that $r\ge k$ and $\mathcal{D}_G(x,y)\cap S_i\ne\emptyset$ for all $i\in\{1,\ldots,r\}$.
\end{corollary}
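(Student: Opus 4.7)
The plan is to derive both parts of the corollary as immediate consequences of the definition of a $k$-partition generator combined with the preceding proposition.

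For the inequality $r \geq k$, I would just unwind the definition: since $\Pi$ is a $k$-partition generator of $G$, the pair $x,y$ must be distinguished by \emph{at least} $k$ members of $\Pi$. The sets $S_1,\ldots,S_r$ listed in the hypothesis are \emph{all} the vertex sets of $\Pi$ that distinguish $x,y$, so the count $r$ of such sets is at least $k$. No additional structural argument is required here.

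For the second part, I would fix an arbitrary index $i\in\{1,\ldots,r\}$ and apply the preceding proposition to the set $S_i$, which by hypothesis distinguishes the pair $x,y$. The proposition then yields a vertex $z\in S_i$ with $d(z,x)\ne d(z,y)$. By the definition of $\mathcal{D}_G(x,y)$ recalled just before the corollary, such a $z$ belongs to $\mathcal{D}_G(x,y)$, so $z\in \mathcal{D}_G(x,y)\cap S_i$ and in particular this intersection is nonempty. Since $i$ was arbitrary, the conclusion holds for every $i\in\{1,\ldots,r\}$.

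I do not anticipate a genuine obstacle: all of the work is carried out in the proposition that immediately precedes the corollary, and the corollary is essentially a packaging of that proposition together with the defining property of a $k$-partition generator. The only subtlety worth stating explicitly in the write-up is that the sets $S_1,\ldots,S_r$ are assumed to enumerate \emph{all} distinguishing parts from $\Pi$, which is what legitimizes comparing $r$ directly with $k$.
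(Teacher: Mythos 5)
Your argument is correct and is exactly the reasoning the paper intends: the corollary is stated without proof precisely because it follows immediately from the definition of a $k$-partition generator (giving $r\ge k$) together with the preceding proposition (giving a distinguishing vertex $z\in S_i$, hence $z\in\mathcal{D}_G(x,y)\cap S_i$). Nothing is missing.
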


We now define the following parameter $\mathfrak{d}(G)=\displaystyle\min_{x,y\in V,x\ne y}\{|\mathcal{D}_G(x,y)|\}$.

\begin{theorem}\label{theokPartitionDimensional}
Any graph $G$ of order $n\ge 2$ is $\mathfrak{d}(G)$-partition dimensional and the time complexity of computing $\mathfrak{d}(G)$  is $O(n^3)$.
\end{theorem}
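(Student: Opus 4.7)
My plan is to split the statement into three parts: an upper bound showing $G$ admits no $(\mathfrak{d}(G)+1)$-partition generator, a matching lower bound exhibiting a $\mathfrak{d}(G)$-partition generator, and the complexity analysis for $\mathfrak{d}(G)$.

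For the upper bound, I would select a pair of distinct vertices $x_0, y_0 \in V$ minimizing $|\mathcal{D}_G(x,y)|$, so $|\mathcal{D}_G(x_0,y_0)| = \mathfrak{d}(G)$. Suppose for contradiction that $\Pi$ is a $k$-partition generator with $k \geq \mathfrak{d}(G)+1$. By Corollary~\ref{remarkDPI}, there exist at least $k$ distinct vertex sets $S_1,\ldots,S_k \in \Pi$ distinguishing $x_0, y_0$, and each satisfies $\mathcal{D}_G(x_0,y_0) \cap S_i \neq \emptyset$. Since the $S_i$ are pairwise disjoint (parts of a partition), picking one representative from each intersection yields $k$ distinct vertices in $\mathcal{D}_G(x_0,y_0)$, hence $|\mathcal{D}_G(x_0,y_0)| \geq k \geq \mathfrak{d}(G)+1$, contradicting the choice of the pair.

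For the lower bound, I would produce an explicit $\mathfrak{d}(G)$-partition generator. The natural candidate is the \emph{singleton partition} $\Pi^* = \{\{v\} : v \in V\}$. For any pair of distinct vertices $x,y \in V$, the singleton $\{z\}$ distinguishes $x,y$ precisely when $d_G(x,z) \neq d_G(y,z)$, that is, when $z \in \mathcal{D}_G(x,y)$. Therefore the number of parts of $\Pi^*$ distinguishing $x,y$ equals $|\mathcal{D}_G(x,y)| \geq \mathfrak{d}(G)$, so $\Pi^*$ is a $\mathfrak{d}(G)$-partition generator. This guarantees the existence of a $\mathfrak{d}(G)$-partition basis and, combined with the upper bound, shows that $\mathfrak{d}(G)$ is indeed the largest such integer, proving $G$ is $\mathfrak{d}(G)$-partition dimensional.

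For the complexity, I would first compute the all-pairs distance matrix: running BFS from each vertex costs $O(n(n+m)) = O(n^3)$, or equivalently Floyd--Warshall in $O(n^3)$. Given this matrix, for each of the $\binom{n}{2}$ pairs $x,y$ one sweeps through all $n$ vertices $z$ and counts those with $d_G(x,z) \neq d_G(y,z)$, obtaining $|\mathcal{D}_G(x,y)|$ in $O(n)$ time per pair. The minimum over all pairs then gives $\mathfrak{d}(G)$ in $O(n^2 \cdot n) = O(n^3)$ additional time, for an overall $O(n^3)$ bound.

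I do not expect a substantial obstacle: the upper bound is a direct application of Corollary~\ref{remarkDPI}, and the lower bound follows from the obvious fact that singletons distinguish exactly through $\mathcal{D}_G(x,y)$. The only care needed is to ensure the $k$ distinguishing parts in the upper-bound argument translate into $k$ \emph{distinct} vertices of $\mathcal{D}_G(x_0,y_0)$, which is immediate because the parts of a partition are disjoint.
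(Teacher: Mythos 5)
Your proposal is correct and follows essentially the same route as the paper: the upper bound via Corollary~\ref{remarkDPI} together with the disjointness of the partition classes, and the lower bound via the singleton partition $\left\{\{v\}:v\in V\right\}$. The only difference is that you prove the $O(n^3)$ complexity directly (all-pairs distances plus an $O(n)$ sweep per pair), whereas the paper simply cites \cite{Yero2017Computing}; your explicit argument is a harmless, self-contained substitute.
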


\begin{proof}
Let $G=(V,E)$ be a $k$-partition dimensional graph and let $\Pi$ be a $k$-partition basis of $G$. Let $x,y\in V$ be two different vertices such that $\mathfrak{d}(G)=|\mathcal{D}_G(x,y)|$. By Corollary~\ref{remarkDPI}, for the $r$ vertex sets of $\Pi$ that distinguish $x,y$, we deduce that $k\le r\le\mathfrak{d}(G)$.  
On the other hand, if we take the partition of singletons $\Pi'=\left\{{\left\{{v}\right\}: v \in V}\right\}$ on $V$, then any pair of different vertices $x',y'\in V$ is distinguished by at least $\mathfrak{d}(G)$ elements of $\Pi'$. Thus $\Pi'$ is a $\mathfrak{d}(G)$-partition generator of $G$, and as a consequence, $k\ge \mathfrak{d}(G)$.
Therefore, $k=\mathfrak{d}(G).$

Finally, it was shown in \cite{Yero2017Computing} that  the time complexity of computing $\mathfrak{d}(G)$  is $O(n^3)$.
\end{proof}

As Theorem~\ref{theokPartitionDimensional} shows, in general, the problem of computing
$\mathfrak{d}(G)$ is very easy to solve. Even so, it would be desirable to obtain some general results on this subject. 

Since for every pair of different vertices $x,y\in V$ we have that $|\mathcal{D}(x,y)|\ge 2$, it follows that the partition of singletons $\Pi=\left\{{\left\{{v}\right\}: v \in V}\right\}$ on $V$ is a $2$-partition generator of $G$ and, as a consequence, we deduce that every graph $G$ is $k$-partition dimensional for some $k\ge 2$. On the other hand, for any connected graph $G$ of order $n>2$ there exists at least one vertex $v\in V(G)$ with at least two neighbours. Since $v$ does not distinguish any pair of its different neighbours, $\mathfrak{d}(G)<n$, and by Theorem~\ref{theokPartitionDimensional}, there is no $n$-partition dimensional graph of order $n>2$. Comments above are emphasized  in the next remark.

\begin{remark}\label{remarkKPartitionDimensionaBound}
Let $G$ be a connected graph of order $n\ge 2$. If $n\ge 3$, then $2\le\mathfrak{d}(G)\le n-1$. Moreover, $\mathfrak{d}(G)=n$ if and only if $G\cong K_2$.
\end{remark}

The concept of $k$-metric dimensional graph is closely related to the concept of $k$-partition dimensional graph. A connected graph $G$ is said to be a \emph{$k$-metric dimensional graph}, if $k$ is the largest integer such that there exists a $k$-metric basis of $G$. The following theorem, which was previously stated in \cite{Estrada-Moreno2013}, will allow us to establish an equivalence between both concepts. 

\begin{theorem}\label{theokmetricdimensional}{\rm \cite{Estrada-Moreno2013}}
A connected graph  $G$ is $k$-metric dimensional if and only if $k=\mathfrak{d}(G)$.
\end{theorem}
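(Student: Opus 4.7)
The plan is to prove the theorem by establishing the two inequalities $k\le\mathfrak{d}(G)$ and $k\ge\mathfrak{d}(G)$, where $k$ denotes the largest integer admitting a $k$-metric basis of $G$. Both directions follow almost immediately from the definitions, together with the elementary observation that a vertex $z$ distinguishes the pair $x,y$ (in the sense of metric generators) if and only if $z\in\mathcal{D}_G(x,y)$.

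First, to see that $k\ge\mathfrak{d}(G)$, I would argue that the full vertex set $V$ is itself a $\mathfrak{d}(G)$-metric generator of $G$. Indeed, for any pair of distinct vertices $x,y\in V$, the number of elements of $V$ that distinguish $x$ and $y$ equals exactly $|\mathcal{D}_G(x,y)|$, which by definition of $\mathfrak{d}(G)$ is at least $\mathfrak{d}(G)$. Hence there exists a $\mathfrak{d}(G)$-metric generator, and since $k$ is the largest such integer, $k\ge\mathfrak{d}(G)$.

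Next, for the converse inequality $k\le\mathfrak{d}(G)$, I would take an arbitrary $k$-metric generator $S$ of $G$ and pick a pair $x,y\in V$ realizing the minimum, so that $|\mathcal{D}_G(x,y)|=\mathfrak{d}(G)$. By the definition of a $k$-metric generator, at least $k$ vertices of $S$ distinguish $x,y$; each of these vertices lies in $\mathcal{D}_G(x,y)\cap S\subseteq\mathcal{D}_G(x,y)$. Therefore $\mathfrak{d}(G)=|\mathcal{D}_G(x,y)|\ge k$, as required. Combining the two inequalities gives $k=\mathfrak{d}(G)$, which is exactly the statement of the theorem.

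Since both directions reduce to rewriting ``$z$ distinguishes $x,y$'' as ``$z\in\mathcal{D}_G(x,y)$'', there is no real obstacle; the only care needed is to keep the quantifiers straight: the bound $\mathfrak{d}(G)\ge k$ must hold for \emph{every} pair $x,y$, so applying it at a minimizing pair is what closes the argument. This short proof also mirrors the argument the author uses for the partition analogue in Theorem~\ref{theokPartitionDimensional}, with the partition of singletons replaced by the full vertex set $V$ and Corollary~\ref{remarkDPI} replaced by the trivial fact that distinguishing vertices in a metric generator lie in $\mathcal{D}_G(x,y)$.
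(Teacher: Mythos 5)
Your proof is correct. The paper itself gives no proof of this theorem---it is imported from \cite{Estrada-Moreno2013}---but your two inequalities (the full vertex set $V$ as a $\mathfrak{d}(G)$-metric generator for the lower bound, and a minimizing pair $x,y$ with $\mathcal{D}_G(x,y)\cap S\subseteq\mathcal{D}_G(x,y)$ for the upper bound) are exactly the standard argument and mirror, as you note, the paper's own proof of the partition analogue in Theorem~\ref{theokPartitionDimensional}.
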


Therefore, by Theorems \ref{theokPartitionDimensional} and \ref{theokmetricdimensional} we have the following result.

\begin{theorem}\label{theoRelationPartitionMetricDimensional}
A connected graph  $G$ is $k$-partition dimensional if and only if it is $k$-metric dimensional.
\end{theorem}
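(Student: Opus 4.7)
The plan is to observe that this theorem is essentially an immediate corollary of the two results immediately preceding it, since both notions are pinned down by the same invariant $\mathfrak{d}(G)$. So there is no real new work: the proof is a two-step chain of equivalences.

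First, I would invoke Theorem~\ref{theokPartitionDimensional}, which states that any graph $G$ of order $n\ge 2$ is $\mathfrak{d}(G)$-partition dimensional. Since the value of $k$ for which $G$ is $k$-partition dimensional is unique by definition (it is the largest integer with a $k$-partition basis), this gives the equivalence: $G$ is $k$-partition dimensional if and only if $k=\mathfrak{d}(G)$. Next I would invoke Theorem~\ref{theokmetricdimensional}, which asserts that $G$ is $k$-metric dimensional if and only if $k=\mathfrak{d}(G)$. Chaining these two equivalences through the common characterization $k=\mathfrak{d}(G)$ yields that $G$ is $k$-partition dimensional if and only if $G$ is $k$-metric dimensional, which is exactly the claim.

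There is no real obstacle to overcome: the main conceptual content has already been handled in Theorem~\ref{theokPartitionDimensional} (where one had to show that singletons achieve $\mathfrak{d}(G)$ and that no partition can exceed it) and in Theorem~\ref{theokmetricdimensional} (cited from the literature). The only thing to verify is that $\mathfrak{d}(G)$ is unambiguously defined for every nontrivial connected graph of order $n\ge 2$, so that the two characterizations really refer to the same integer; this is clear from the definition $\mathfrak{d}(G)=\min_{x\ne y}|\mathcal{D}_G(x,y)|$, which is a finite positive integer (in fact $\ge 2$ by Remark~\ref{remarkKPartitionDimensionaBound}). Hence the proof reduces to a one-line composition of the two cited theorems.
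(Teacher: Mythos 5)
Your proposal is correct and matches the paper exactly: the paper likewise derives this theorem immediately by combining Theorem~\ref{theokPartitionDimensional} (which characterizes $k$-partition dimensionality as $k=\mathfrak{d}(G)$) with Theorem~\ref{theokmetricdimensional} (the analogous characterization for $k$-metric dimensionality). No further commentary is needed.
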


The following results that we present (from Proposition \ref{coro2DimensionalG} to Theorem \ref{theoTreeDimensionalK}) were previously obtained in \cite{Estrada-Moreno2013} for the $k$-metric dimensional graphs. By Theorem~\ref{theoRelationPartitionMetricDimensional} all these results are equivalent for $k$-partition dimensional graphs, therefore for the sake of completeness we rewrite them in terms of $k$-partition dimensional graphs. 

\begin{proposition}\label{coro2DimensionalG}
A connected graph $G$ satisfies that $\mathfrak{d}(G)=2$ if and only if there are at least two vertices of $G$ belonging to the same twin equivalence class.
\end{proposition}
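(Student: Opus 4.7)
The plan is to derive both implications directly from the observation already made in the preliminaries, namely that for any two distinct vertices $x,y\in V$ one has $\{x,y\}\subseteq\mathcal{D}_G(x,y)$, with equality precisely when $x$ and $y$ lie in the same twin equivalence class. Once this is in hand, the statement is essentially a repackaging of the definition of $\mathfrak{d}(G)$.

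For the forward direction, I would assume $\mathfrak{d}(G)=2$ and pick a pair of distinct vertices $x,y\in V$ realising the minimum, so that $|\mathcal{D}_G(x,y)|=2$. Since $x,y\in\mathcal{D}_G(x,y)$ automatically, this forces $\mathcal{D}_G(x,y)=\{x,y\}$, and the cited characterisation then gives $N(x)=N(y)$ or $N[x]=N[y]$, so $x$ and $y$ belong to the same twin class.

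For the converse, suppose $x,y$ are distinct twins. The characterisation yields $\mathcal{D}_G(x,y)=\{x,y\}$, hence $|\mathcal{D}_G(x,y)|=2$. Combined with the general lower bound $|\mathcal{D}_G(x',y')|\ge 2$ valid for every pair of distinct vertices (again because $x',y'\in\mathcal{D}_G(x',y')$), the minimum defining $\mathfrak{d}(G)$ equals $2$.

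There is really no obstacle here: the only nontrivial content has been absorbed into the earlier observation about $\mathcal{D}_G(x,y)=\{x,y\}$ characterising twins, which one should perhaps justify in a line by noting that $z\notin\{x,y\}$ fails to distinguish $x,y$ iff $z$ is equidistant from both, and a case check on whether $z\in N(x)\cap N(y)$ or not shows that this happens for every such $z$ exactly when $x,y$ are twins. After that, the proof is two short paragraphs as above.
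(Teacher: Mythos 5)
Your proof is correct. Note that the paper itself does not prove this proposition: it imports it from the earlier work on $k$-metric dimensional graphs via Theorem~\ref{theoRelationPartitionMetricDimensional}, and the only ingredient it records is exactly the observation you lean on, namely that $x,y\in\mathcal{D}_G(x,y)$ always and that $\mathcal{D}_G(x,y)=\{x,y\}$ precisely when $x$ and $y$ are twins. Your two-paragraph argument is the natural direct proof one would extract from that observation, so it matches the intended route; the one line you should tighten is the justification of the twin characterisation itself, where the cleanest argument is that if every $z\notin\{x,y\}$ is equidistant from $x$ and $y$, then in particular every neighbour of $x$ other than $y$ is at distance $1$ from $y$, giving $N(x)\setminus\{y\}=N(y)\setminus\{x\}$ and hence $N(x)=N(y)$ or $N[x]=N[y]$ according to whether $x\sim y$.
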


\begin{theorem}
A connected graph $G$ of order $n\ge 3$ holds that $\mathfrak{d}(G)=n-1$ if and only if $G$ is a path or $G$ is an odd cycle.
\end{theorem}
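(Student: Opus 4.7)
The plan is to establish both implications by analyzing $|\mathcal{D}_G(x,y)|$ on suitable pairs of vertices. For the forward direction, suppose $G\cong P_n$ or $G\cong C_{2m+1}$. In a path $P_n$ with vertices $v_1,\ldots,v_n$ listed in order, a vertex $v_k$ is equidistant from $v_i,v_j$ (with $i<j$) if and only if $2k=i+j$, which admits at most one integer solution; hence $|\mathcal{D}_G(v_i,v_j)|\ge n-1$, with equality attained by the pair $\{v_1,v_3\}$. In the odd cycle $C_{2m+1}$, by vertex-transitivity it suffices to consider pairs $v_0,v_a$ with $1\le a\le m$. Any equidistant vertex must be the midpoint of one of the two arcs joining $v_0$ and $v_a$, of lengths $a$ and $2m+1-a$ respectively; since these lengths sum to the odd number $2m+1$, exactly one is even, so exactly one midpoint vertex exists, yielding $|\mathcal{D}_G(v_0,v_a)|=n-1$ for every pair, and thus $\mathfrak{d}(G)=n-1$.

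For the converse, assume $\mathfrak{d}(G)=n-1$. The case $n=3$ is immediate, since the only connected graphs of order $3$ are $P_3$ and $C_3$, so I assume $n\ge 4$, which gives $n-1\ge 3$. I first show $\Delta(G)\le 2$. Suppose to the contrary that some vertex $v$ has three neighbors $u_1,u_2,u_3$: because $d(v,u_i)=1$ for all $i$, the vertex $v$ fails to distinguish every pair $u_i,u_j$; and since the hypothesis $|\mathcal{D}_G(u_i,u_j)|\ge n-1$ allows at most one non-distinguisher per pair, $v$ is the unique non-distinguisher. In particular, the remaining neighbor $u_k$ must distinguish $u_i,u_j$; together with $d(u_k,u_i),d(u_k,u_j)\in\{1,2\}$ (through $v$), this forces $u_k$ to be adjacent to exactly one of $u_i,u_j$. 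Applying this criterion to all three pairs simultaneously is inconsistent: say $u_3\sim u_1$ and $u_3\not\sim u_2$, as required for the pair $(u_1,u_2)$; then the pair $(u_1,u_3)$ forces $u_2\sim u_1$; but now $d(u_1,u_2)=d(u_1,u_3)=1$, so $u_1$ fails to distinguish the pair $(u_2,u_3)$ in addition to $v$, a contradiction.

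Given $\Delta(G)\le 2$ and connectivity, $G$ is a path or a cycle. To rule out even cycles, suppose $G\cong C_{2m}$ with $m\ge 2$: the pair $v_0,v_2$ has two equidistant vertices, namely the short-arc midpoint $v_1$ and the long-arc midpoint $v_{m+1}$ (sitting at the midpoint of the arc of even length $2m-2$), so $|\mathcal{D}_G(v_0,v_2)|\le n-2$, contradicting $\mathfrak{d}(G)=n-1$. Therefore $G$ is either a path or an odd cycle. The main obstacle is the degree-bound step: eliminating vertices of degree $\ge 3$ is not immediate from the triangle inequality alone, since $v$ being the sole non-distinguisher of each pair inside its neighborhood is individually consistent with $\mathfrak{d}(G)=n-1$; the contradiction only emerges when the three pair-distinguishing requirements on $u_1,u_2,u_3$ are imposed simultaneously and force an incompatible adjacency pattern.
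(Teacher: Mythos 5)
Your proof is correct. Both directions check out: for $P_n$ the equation $2k=i+j$ has at most one solution, and the pair $v_1,v_3$ attains $n-1$; for $C_{2m+1}$ the arc-midpoint argument is valid because an equidistant vertex's shortest paths to the two targets must leave it in opposite directions around the cycle, forcing it to be the midpoint of one of the two arcs, exactly one of which has even length; and in the converse the local adjacency analysis around a degree-$3$ vertex genuinely produces an over-constrained system (the pair $(u_2,u_3)$ ends up with two non-distinguishers $v$ and $u_1$), after which $\Delta(G)\le 2$ reduces everything to paths and cycles and the pair $v_0,v_2$ kills even cycles. Note, however, that the paper does not prove this theorem at all: it imports the corresponding statement for $(n-1)$-\emph{metric}-dimensional graphs from the reference on the $k$-metric dimension and transfers it through the equivalence $k$-partition dimensional $\Leftrightarrow$ $k$-metric dimensional, which in turn rests on the identity $k=\mathfrak{d}(G)$ proved in Theorem~\ref{theokPartitionDimensional}. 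So your argument is a genuinely different, self-contained route working directly with $\mathfrak{d}(G)=\min|\mathcal{D}_G(x,y)|$; its main added value is the elementary elimination of vertices of degree at least three via the ``adjacent to exactly one of the pair'' parity constraint, whereas the paper's route buys brevity and consistency with the already-established $k$-metric machinery at the cost of being non-self-contained. One tiny presentational quibble: in the even-cycle step you might state explicitly that $v_1$ and $v_{m+1}$ are distinct from each other and from $v_0,v_2$ (guaranteed by $m\ge 2$), since that is what makes $|\mathcal{D}_G(v_0,v_2)|\le n-2$.
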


\begin{proposition}\label{propKClyce}
If $C_{n}$ is a even cycle of order $n$, then $\mathfrak{d}(C_n)=n-2$.
\end{proposition}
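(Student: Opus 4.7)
The plan is to reduce the statement to a direct computation of $\mathfrak{d}(C_n)$ using the explicit distance formula on a cycle, and then invoke Theorem~\ref{theokPartitionDimensional} to translate this into the $k$-partition dimensional conclusion. So the whole task is to show that for even $n=2m$, the minimum of $|\mathcal{D}_{C_n}(x,y)|$ over ordered pairs $x\ne y$ equals $n-2$.

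Label the vertices cyclically as $v_0,v_1,\ldots,v_{n-1}$ with $d(v_i,v_j)=\min(|i-j|,\,n-|i-j|)$. Since $C_n$ is vertex-transitive, it suffices to consider, for each $\ell\in\{1,\ldots,m\}$, the pair $\{v_0,v_\ell\}$ and count the number of vertices $v_k$ that fail to distinguish it, i.e.\ satisfy $d(v_k,v_0)=d(v_k,v_\ell)$. The approach will be the following: interpret $v_k$ as ``equidistant from $v_0$ and $v_\ell$'' geometrically on the cycle, observe that there are only two candidate locations (the midpoint of the short arc from $v_0$ to $v_\ell$ and the midpoint of the long arc), and check whether these midpoints land on actual vertices.

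For $\ell$ even, I would verify by direct substitution that $v_{\ell/2}$ and $v_{(n+\ell)/2}$ are vertices, and that both satisfy $d(v_k,v_0)=d(v_k,v_\ell)$. Conversely, a short case check (splitting $k\in\{0,\dots,n-1\}$ into the intervals $[0,\ell]$ and $[\ell,n]$ and solving the equality in each interval using the explicit minimum formula) shows these are the only two such $k$. For $\ell$ odd, the same analysis gives equations like $k=\ell/2$ or $k=(n+\ell)/2$ which have no integer solutions (since $n$ is even and $\ell$ is odd), hence \emph{no} vertex fails to distinguish $v_0$ and $v_\ell$. Consequently $|\mathcal{D}_{C_n}(v_0,v_\ell)|=n$ when $\ell$ is odd, and $|\mathcal{D}_{C_n}(v_0,v_\ell)|=n-2$ when $\ell$ is even. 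Since an even $\ell$ with $2\le\ell\le m$ exists (for instance $\ell=2$, using $n\ge 4$), the minimum equals $n-2$, so $\mathfrak{d}(C_n)=n-2$, and Theorem~\ref{theokPartitionDimensional} yields the claim.

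The only mildly delicate step is the reverse direction of the midpoint count, i.e.\ ruling out any further vertex equidistant from $v_0$ and $v_\ell$. I expect this to be the main (and only real) obstacle, and it is handled by writing $d(v_k,v_0)=d(v_k,v_\ell)$ explicitly, noting that each side is a piecewise-linear function of $k$ with slope $\pm 1$, and observing that two such functions can agree on at most two values of $k\in\{0,\ldots,n-1\}$; these two values are precisely the midpoints identified above. The evenness of $n$ is used essentially here to ensure that the long-arc midpoint has integer index.
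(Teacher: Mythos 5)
Your computation is correct: the interval-by-interval analysis shows that for even $\ell$ the pair $v_0,v_\ell$ has exactly the two arc-midpoints as non-distinguishing vertices, and for odd $\ell$ it has none, so $\mathfrak{d}(C_n)=n-2$. Note, however, that the paper itself offers no proof of this proposition: it is one of the results imported verbatim from the earlier $k$-metric dimension paper \cite{Estrada-Moreno2013} and restated via the equivalence in Theorem~\ref{theoRelationPartitionMetricDimensional}, so your argument is a self-contained replacement for a citation rather than an alternative to an in-paper proof; what it buys is that the reader need not chase the reference, at the cost of a page of explicit distance bookkeeping. One small caution on the phrasing of your ``delicate step'': it is not literally true that two piecewise-linear slope-$\pm 1$ functions on the cycle agree in at most two points --- on a segment where both have the same slope their difference is constant and could in principle vanish identically --- so the clean justification is the one you also describe, namely checking each of the four intervals $[0,\ell]$, $[\ell,m]$, $[m,m+\ell]$, $[m+\ell,n]$ and observing that on the two same-slope intervals the difference equals $\pm\ell\ne 0$, while on the two opposite-slope intervals the unique solution is $k=\ell/2$ or $k=(n+\ell)/2$, integral exactly when $\ell$ is even. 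With that phrasing tightened, the proof is complete.
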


The \emph{Cartesian product graph} $G\square H$, of two graphs $G=(V_{1},E_{1})$ and $H=(V_{2},E_{2})$, is the graph whose vertex set is $V(G\square H)=V_{1}\times V_{2}$ and any two distinct vertices $(x_{1},x_{2}),(y_{1},y_{2})\in V_{1}\times V_{2}$ are adjacent in $G\square H$ if and only if either:

\begin{enumerate}[(a)]
\item\label{cartesian1}$\ $ $x_{1}=y_{1}$ and $x_{2}\sim y_{2}$, or
\item\label{cartesian2}$\ $ $x_{1}\sim y_{1}$ and $x_{2}=y_{2}$.
\end{enumerate}

\begin{proposition}\label{corolGCartesian}
If $G$ and $H$ are two connected graphs of order $n\ge 2$ and $n'\ge 3$, respectively, then $\mathfrak{d}(G\square H)\geq 3$.
\end{proposition}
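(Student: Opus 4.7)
The plan is to combine Theorem~\ref{theokPartitionDimensional} with a direct distance argument: since $G\square H$ is $\mathfrak{d}(G\square H)$-partition dimensional, it suffices to prove $|\mathcal{D}_{G\square H}(x,y)|\ge 3$ for every pair of distinct vertices $x=(x_1,x_2)$, $y=(y_1,y_2)\in V(G\square H)$. The workhorse is the standard identity $d_{G\square H}((a,b),(c,d))=d_G(a,c)+d_H(b,d)$, from which the distinguishing condition becomes additive in the two coordinates.

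First I would split into cases according to how many coordinates $x$ and $y$ share. When $x_1=y_1$, a vertex $(z_1,z_2)$ distinguishes $x$ from $y$ precisely when $z_2\in\mathcal{D}_H(x_2,y_2)$; since $|V(G)|\ge 2$ and $|\mathcal{D}_H(x_2,y_2)|\ge 2$, this already produces at least four distinguishing vertices, and the symmetric case $x_2=y_2$ (which in addition supplies the factor $n'\ge 3$) is handled identically.

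The substantive case is $x_1\ne y_1$ and $x_2\ne y_2$. Here $x,y\in\mathcal{D}_{G\square H}(x,y)$ for free, so only one additional vertex is needed. If $d_G(x_1,y_1)\ne d_H(x_2,y_2)$, then the distances from $(x_1,y_2)$ to $x$ and $y$ equal $d_H(x_2,y_2)$ and $d_G(x_1,y_1)$ respectively, so $(x_1,y_2)$ distinguishes (and is plainly distinct from $x,y$ thanks to the case hypothesis). Otherwise set $t:=d_G(x_1,y_1)=d_H(x_2,y_2)\ge 1$. Invoking $|V(H)|\ge 3$, I would pick $z_2\in V(H)\setminus\{x_2,y_2\}$ and examine the two candidates $(x_1,z_2)$ and $(y_1,z_2)$. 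The distance formula shows that $(x_1,z_2)$ fails to distinguish iff $d_H(z_2,x_2)=t+d_H(z_2,y_2)$, and $(y_1,z_2)$ fails iff $d_H(z_2,y_2)=t+d_H(z_2,x_2)$; if both failed simultaneously, summing the two equalities would give $2t=0$, contradicting $t\ge 1$. Hence at least one of the two vertices distinguishes $x,y$ and is clearly different from both.

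The main obstacle is precisely this equal-distance subcase, where the natural crossover candidates $(x_1,y_2)$ and $(y_1,x_2)$ both collapse to non-distinguishers; the remedy is to push the second coordinate off $\{x_2,y_2\}$, which is exactly where the hypothesis $n'\ge 3$ is used, and the triangle inequality inside $H$ does the rest via the two-equations-contradict-each-other trick.
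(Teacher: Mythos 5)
Your argument is correct: the distance formula $d_{G\square H}((a,b),(c,d))=d_G(a,c)+d_H(b,d)$ is the right tool, the case analysis is exhaustive, and the key subcase $d_G(x_1,y_1)=d_H(x_2,y_2)=t\ge 1$ is handled cleanly by observing that $(x_1,z_2)$ and $(y_1,z_2)$ cannot both fail to distinguish, since adding the two failure equations forces $2t=0$. The route is, however, different from the paper's. The paper offers no proof at all for this proposition: it is one of the results imported wholesale from the $k$-metric dimensional setting via Theorem~\ref{theoRelationPartitionMetricDimensional}, and the underlying argument in the cited source goes through the twin characterization, namely that $\mathfrak{d}(G)=2$ exactly when two vertices lie in a common twin equivalence class (Proposition~\ref{coro2DimensionalG}), so that it suffices to check that $G\square H$ has no pair of twins when $n\ge 2$ and $n'\ge 3$. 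Since being twins is equivalent to $\mathcal{D}(x,y)=\{x,y\}$, the two approaches prove the same inequality $|\mathcal{D}_{G\square H}(x,y)|\ge 3$, but the twin route only requires inspecting adjacency (one verifies $N(x)\ne N(y)$ and $N[x]\ne N[y]$ for the relevant pairs), whereas your route works directly with distances and, as a bonus, exhibits an explicit third distinguishing vertex in every case; it is fully self-contained and does not need Theorem~\ref{theokPartitionDimensional} at all, since the statement only concerns $\mathfrak{d}$. Two cosmetic remarks: the hypothesis $n'\ge 3$ is genuinely used only in the equal-distance subcase (your parenthetical about it "supplying the factor $n'\ge 3$" in the $x_2=y_2$ case is harmless but unnecessary, as $2\cdot 2=4$ already suffices there), and the "triangle inequality" you invoke at the end is really just the summation of the two failure equations, which needs no metric input beyond $t\ge 1$.
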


A {\em clique} in a graph $G$ is a set of vertices $S$ such that the subgraph induced by $S$ is isomorphic to a complete graph. The maximum cardinality of a clique in a graph $G$ is the {\em clique number} and it is denoted by $\omega(G)$.

\begin{theorem}\label{clique-versus-k}
If $G$ is a graph of order $n$ different from a complete graph, then $\mathfrak{d}(G)\le n-\omega(G)+1$.
\end{theorem}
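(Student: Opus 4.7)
The plan is to exhibit a pair of vertices $x,y\in V$ whose distinguishing set $\mathcal{D}_G(x,y)$ has cardinality at most $n-\omega(G)+1$, which together with Theorem~\ref{theokPartitionDimensional} gives the bound on $\mathfrak{d}(G)$. I would fix a maximum clique $K\subseteq V$, so $|K|=\omega(G)$. Since $G$ is not complete we have $K\subsetneq V$ (otherwise $V$ itself would be a clique), and moreover $n\ge 3$ (as $G\ne K_2$ and $G$ is connected with $n\ge 2$). In particular, we may choose a vertex $v\in V\setminus K$.

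The key lemma I would establish first is that the distances from $v$ to the vertices of $K$ take at most two distinct values, and these values differ by at most one. This follows immediately from the triangle inequality: for any $u,u'\in K$ we have $u\sim u'$, hence $d(v,u')\le d(v,u)+d(u,u')=d(v,u)+1$, and symmetrically, so $|d(v,u)-d(v,u')|\le 1$.

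Assuming $\omega(G)\ge 3$, pigeonhole then guarantees two distinct vertices $x,y\in K$ with $d(v,x)=d(v,y)$; that is, $v$ does not distinguish $x,y$. In addition, every $z\in K\setminus\{x,y\}$ satisfies $d(z,x)=d(z,y)=1$ because $K$ is a clique, so no such $z$ distinguishes $x$ and $y$ either. These two observations give $(K\setminus\{x,y\})\cup\{v\}\subseteq V\setminus\mathcal{D}_G(x,y)$, a set of size $(\omega(G)-2)+1=\omega(G)-1$ (noting $v\notin K$), whence
$$|\mathcal{D}_G(x,y)|\le n-(\omega(G)-1)=n-\omega(G)+1,$$
and the desired inequality follows from Theorem~\ref{theokPartitionDimensional}.

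The residual case $\omega(G)=2$ is not really an obstacle: the claimed bound then reads $\mathfrak{d}(G)\le n-1$, which is exactly the content of Remark~\ref{remarkKPartitionDimensionaBound}, valid since $n\ge 3$. The only delicate point in the argument is noticing that the triangle-inequality bound on distances to a clique, combined with pigeonhole, already suffices when $\omega(G)\ge 3$; once this is seen, the rest is bookkeeping.
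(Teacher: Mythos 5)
Your proof is correct and complete. Note that the paper itself gives no proof of this theorem---it is imported from \cite{Estrada-Moreno2013} via the equivalence of Theorem~\ref{theoRelationPartitionMetricDimensional}---and your argument (fix a maximum clique $K$, pick $v\notin K$, use the triangle inequality plus pigeonhole to find $x,y\in K$ with $d(v,x)=d(v,y)$, so that $(K\setminus\{x,y\})\cup\{v\}$ misses $\mathcal{D}_G(x,y)$) is essentially the standard one from that reference, with the $\omega(G)=2$ case correctly deferred to Remark~\ref{remarkKPartitionDimensionaBound}. One tiny simplification: you do not need Theorem~\ref{theokPartitionDimensional} at all, since $\mathfrak{d}(G)\le|\mathcal{D}_G(x,y)|$ is immediate from the definition of $\mathfrak{d}(G)$ as a minimum.
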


In order to continue presenting our next results, we need to introduce some definitions. A vertex of degree at least three in a graph $G$ will be called a \emph{major vertex} of $G$. Any end vertex $u$ of $G$ is said to be a \emph{terminal vertex} of a major vertex $v$ of $G$ if $d_{G}(u,v)<d_{G}(u,w)$ for every other major vertex $w$ of $G$. The \emph{terminal degree} $\ter(v)$ of a major vertex $v$ is the number of terminal vertices of $v$. A major vertex $v$ of $G$ is an \emph{exterior major vertex} of $G$ if it has positive terminal degree. Let $\mathcal{M}(G)$\label{calM} be the set of exterior major vertices of $G$ having terminal degree greater than one.

From now on we consider that $w\in \mathcal{M}(G)$. Given a terminal vertex $u$ of $w$, we denote by  $P(u,w)$ the shortest path that starts at $u$ and ends at $w$. Let $l(u,w)$ be the length of $P(u,w)$. Now, given two terminal vertices $u,v$ of $w$ we denote by  $P(u,v)$ the shortest path from $u$ to $v$ containing $w$, and by $\varsigma(u,v)$ the length of $P(u,w,v)$. Notice that, by definition of exterior major vertex, $P(u,w,v)$ is obtained by concatenating the paths $P(u,w)$ and $P(w,v)$, where $w$ is the only vertex of degree greater than two lying on these paths. We also define $U(w)$ as the set of terminal vertices of $w$. Thus, it follows that $\ter(w)=|U(w)|$. Finally, we define $\varsigma(w)=\displaystyle\min_{u,v\in U(w),u\ne v}\{\varsigma(u,v)\}$ and $l(w)=\displaystyle\min_{z\in U(w)}\{l(z,w)\}$. From the local parameters above we define the following global parameter 
$$\varsigma(G)=\min_{w\in \mathcal{M}(G)}\{\varsigma(w)\}.$$\label{oddDefinition}

An example which helps to understand the notation above is given in Figure~\ref{example-G}.

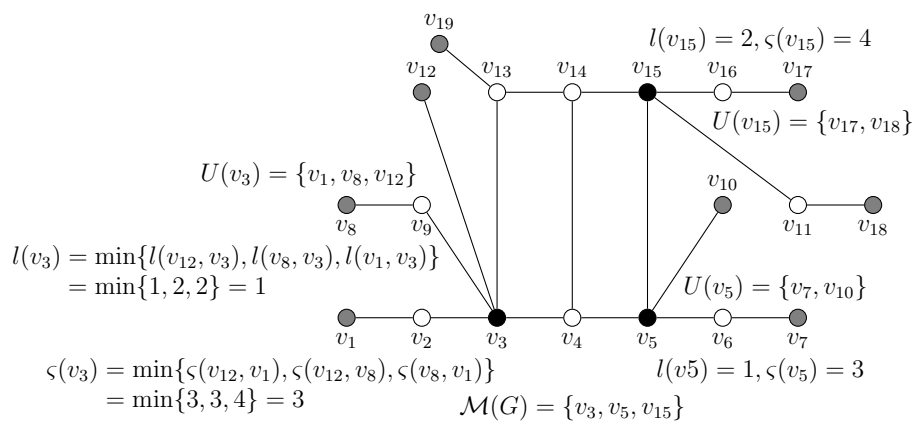
\begin{figure}[!ht]
\centering
\begin{tikzpicture}[transform shape, inner sep = .8mm]
\node [draw=black, shape=circle, fill=gray,text=black] (v1) at (0,0) {};
\node [scale=.8] at ([yshift=-.3 cm]v1) {$v_1$};
\node [draw=black, shape=circle, fill=gray,text=black] (v8) at (0,1.5) {};
\node [scale=.8] at ([yshift=-.3 cm]v8) {$v_8$};
\node [draw=black, shape=circle, fill=gray,text=black] (v12) at (1,3) {};
\node [scale=.8] at ([yshift=.3 cm]v12) {$v_{12}$};
\node [draw=black, shape=circle, fill=white,text=black] (v2) at ([shift=({1,0})]v1) {};
\node [scale=.8] at ([yshift=-.3 cm]v2) {$v_2$};
\node [draw=black, shape=circle, fill=black,text=black] (v3) at ([shift=({1,0})]v2) {};
\node [scale=.8] at ([yshift=-.3 cm]v3) {$v_3$};
\node [draw=black, shape=circle, fill=white,text=black] (v4) at ([shift=({1,0})]v3) {};
\node [scale=.8] at ([yshift=-.3 cm]v4) {$v_4$};
\node [draw=black, shape=circle, fill=black,text=black] (v5) at ([shift=({1,0})]v4) {};
\node [scale=.8] at ([yshift=-.3 cm]v5) {$v_5$};
\node [draw=black, shape=circle, fill=white,text=black] (v6) at ([shift=({1,0})]v5) {};
\node [scale=.8] at ([yshift=-.3 cm]v6) {$v_6$};
\node [draw=black, shape=circle, fill=gray,text=black] (v7) at ([shift=({1,0})]v6) {};
\node [scale=.8] at ([yshift=-.3 cm]v7) {$v_7$};
\node [draw=black, shape=circle, fill=white,text=black] (v9) at ([shift=({1,0})]v8) {};
\node [scale=.8] at ([yshift=-.3 cm]v9) {$v_9$};
\node [draw=black, shape=circle, fill=gray,text=black] (v10) at ([shift=({4,0})]v9) {};
\node [scale=.8] at ([yshift=.3 cm]v10) {$v_{10}$};
\node [draw=black, shape=circle, fill=white,text=black] (v11) at ([shift=({1,0})]v10) {};
\node [scale=.8] at ([yshift=-.3 cm]v11) {$v_{11}$};
\node [draw=black, shape=circle, fill=gray,text=black] (v18) at ([shift=({1,0})]v11) {};
\node [scale=.8] at ([yshift=-.3 cm]v18) {$v_{18}$};
\node [draw=black, shape=circle, fill=white,text=black] (v13) at ([shift=({1,0})]v12) {};
\node [scale=.8] at ([yshift=.3 cm]v13) {$v_{13}$};
\node [draw=black, shape=circle, fill=white,text=black] (v14) at ([shift=({1,0})]v13) {};
\node [scale=.8] at ([yshift=.3 cm]v14) {$v_{14}$};
\node [draw=black, shape=circle, fill=black,text=black] (v15) at ([shift=({1,0})]v14) {};
\node [scale=.8] at ([yshift=.3 cm]v15) {$v_{15}$};
\node [draw=black, shape=circle, fill=white,text=black] (v16) at ([shift=({1,0})]v15) {};
\node [scale=.8] at ([yshift=.3 cm]v16) {$v_{16}$};
\node [draw=black, shape=circle, fill=gray,text=black] (v17) at ([shift=({1,0})]v16) {};
\node [scale=.8] at ([yshift=.3 cm]v17) {$v_{17}$};
\node [draw=black, shape=circle, fill=gray,text=black] (v19) at ([shift=(140:1cm)]v13) {};
\node [scale=.8] at ([yshift=.3 cm]v19) {$v_{19}$};

\node [scale=.8] at ([yshift=-1.2cm]v4) {$\mathcal{M}(G)=\{v_3,v_5,v_{15}\}$};
\node [scale=.8] at ([shift={(-.5cm,.4cm)}]v8) {$U(v_3)=\{v_1,v_8,v_{12}\}$};
\node [scale=.8] at ([shift={(-.3cm,.4cm)}]v7) {$U(v_5)=\{v_7,v_{10}\}$};
\node [scale=.8] at ([shift={(.2cm,-.4cm)}]v17) {$U(v_{15})=\{v_{17},v_{18}\}$};
\node [scale=.8] at ([shift={(-3cm,-.7cm)}]v3) {$\varsigma(v_3)=\displaystyle\min\{\varsigma(v_{12},v_1),\varsigma(v_{12},v_8),\varsigma(v_8,v_1)\}$};
\node [scale=.8] at ([shift={(-3.88cm,-1.1cm)}]v3) {$=\min\{3,3,4\}=3$};
\node [scale=.8] at ([shift={(-1.6cm,.8cm)}]v1) {$l(v_3)=\min\{l(v_{12},v_3),l(v_8,v_3),l(v_1,v_3)\}$};
\node [scale=.8] at ([shift={(-2.38cm,.4cm)}]v1) {$=\min\{1,2,2\}=1$};
\node [scale=.8] at ([shift={(1.5cm,-.7cm)}]v5) {$l(v5)=1, \varsigma(v_5)=3$};
\node [scale=.8] at ([shift={(1.5cm,.7cm)}]v15) {$l(v_{15})=2, \varsigma(v_{15})=4$};

\draw[black] (v1) -- (v2)  -- (v3) -- (v4) -- (v5) -- (v6) -- (v7);
\draw[black] (v8) -- (v9)  -- (v3) -- (v12);
\draw[black] (v3) -- (v13)  -- (v14) -- (v15) -- (v16) -- (v17);
\draw[black] (v11) -- (v15)  -- (v5) -- (v10);
\draw[black] (v4) -- (v14);
\draw[black] (v11) -- (v18);
\draw[black] (v13) -- (v19);
\end{tikzpicture}
\caption{A graph $G$ with $\varsigma(G)=3$. The vertices belonging to $\mathcal{M}(G)$ are depicted in black while the terminal vertices are represented in gray.}
\label{example-G}
\end{figure}

According to this notation we present the following result.

\begin{theorem}\label{coroKcota}
If $G$ is a connected graph such that  $\mathcal{M}(G)\ne \emptyset$, then $\mathfrak{d}(G)\leq \varsigma(G)$.
\end{theorem}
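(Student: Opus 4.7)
The plan is to exhibit a pair of vertices $x,y\in V(G)$ with $|\mathcal{D}_G(x,y)|\le \varsigma(G)$, which gives the bound immediately since $\mathfrak{d}(G)=\min_{x\ne y}|\mathcal{D}_G(x,y)|$ by definition. First I would pick $w\in\mathcal{M}(G)$ realizing $\varsigma(G)=\varsigma(w)$ and two distinct terminal vertices $u,v\in U(w)$ realizing $\varsigma(w)=\varsigma(u,v)$. Write the shortest $u$--$w$ path as $u=u_0,u_1,\ldots,u_a=w$ and the shortest $w$--$v$ path as $w=v_0,v_1,\ldots,v_b=v$, where $a=l(u,w)\ge 1$, $b=l(v,w)\ge 1$, and $a+b=\varsigma(u,v)=\varsigma(G)$. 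The structural fact I would extract from the definition of terminal vertex of $w$ is that every internal vertex on each of these two paths has degree exactly $2$; consequently $w$ is the only vertex of degree $\ge 3$ on $P(u,w,v)$, the two legs intersect only in $w$, and no interior vertex of either leg has an off-path neighbour.

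The candidate pair is $x=u_{a-1}$ and $y=v_1$, the two neighbours of $w$ on $P(u,w,v)$; note $x\ne y$ because the two legs intersect only in $w$. I then claim $\mathcal{D}_G(x,y)\subseteq V(P(u,w,v))\setminus\{w\}$. I would verify this by cases on the location of an arbitrary vertex $z$: for $z=w$ we have $d(w,x)=d(w,y)=1$, so $w$ does not distinguish; for $z\notin V(P(u,w,v))$, any $z$--$x$ walk must enter the $u$-leg at $w$ (the degree-$2$ interiors admit no shortcut), so $d(z,x)=d(z,w)+1$, and symmetrically $d(z,y)=d(z,w)+1$, so $z$ does not distinguish either; for $z=u_i$ with $0\le i\le a-1$ a direct computation gives $d(z,x)=a-1-i$ and $d(z,y)=(a-i)+1$ (the only $z$--$y$ path goes through $w$), and analogously for $z=v_j$ with $1\le j\le b$, so those $z$ may or may not lie in $\mathcal{D}_G(x,y)$ but they already belong to $V(P(u,w,v))\setminus\{w\}$.

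Combining, $|\mathcal{D}_G(x,y)|\le |V(P(u,w,v))\setminus\{w\}|=a+b=\varsigma(G)$, and therefore $\mathfrak{d}(G)\le |\mathcal{D}_G(x,y)|\le \varsigma(G)$. The only delicate point is the justification in the second case that a vertex $z$ off $P(u,w,v)$ is forced to route through $w$ when reaching either $x$ or $y$; this is exactly where the degree-$2$ condition on all interior vertices of both legs is essential, and it is the step that I expect to warrant an explicit line in the write-up. The boundary instances $a=1$ or $b=1$ (which collapse $x=u$ or $y=v$) require no change, since $x\ne y$ still holds by $u\ne v$ and the legs intersecting only in $w$.
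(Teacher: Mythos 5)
Your argument is correct: the pair $x=u_{a-1}$, $y=v_1$ does satisfy $\mathcal{D}_G(x,y)\subseteq V(P(u,w,v))\setminus\{w\}$, since the definition of terminal vertex forces every internal vertex of each leg to have degree exactly two, making each leg a pendant path whose only access from the rest of the graph is through $w$; hence $|\mathcal{D}_G(x,y)|\le \varsigma(G)$ and the bound follows from $\mathfrak{d}(G)=\min_{x\ne y}|\mathcal{D}_G(x,y)|$. Note that the paper itself offers no proof of this theorem --- it imports the corresponding statement for $k$-metric dimensional graphs from the cited reference and invokes the equivalence of Theorem~\ref{theoRelationPartitionMetricDimensional} --- and your direct argument is essentially the one used in that reference, so there is nothing to fix.
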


The upper bound on $\mathfrak{d}(G)$ given in Theorem \ref{coroKcota} is sharp for the case of trees different from paths, as the following result shows.

\begin{theorem}\label{theoTreeDimensionalK}
If $T$ is a tree different from a path, then $\mathfrak{d}(T)=\varsigma(T)$.
\end{theorem}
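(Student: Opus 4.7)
First, $T$ being a tree different from a path forces $\mathcal{M}(T)\ne\emptyset$: the (sub)tree whose nodes are the major vertices of $T$, with edges representing paths through degree-$2$ vertices, has a leaf, and any such leaf-major has at least $\deg_T-1\ge 2$ path-branches ending in leaves, hence terminal degree at least $2$. So Theorem~\ref{coroKcota} already yields $\mathfrak{d}(T)\le\varsigma(T)$, and my goal is the matching lower bound: $|\mathcal{D}_T(x,y)|\ge\varsigma(T)$ for every pair of distinct vertices $x,y$.

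I would split on the parity of $d(x,y)$. If it is odd, then in a tree $d(z,x)-d(z,y)\equiv d(x,y)\pmod 2$ for every $z$, so every vertex distinguishes the pair, giving $|\mathcal{D}_T(x,y)|=n>\varsigma(T)$ (since $\varsigma(T)$ equals $d(u_0,v_0)$ for some leaves $u_0,v_0$, so $\varsigma(T)\le n-1$). Otherwise $d(x,y)=2r$; let $m_0$ be the midpoint on the $x$-$y$ path and let $B_x,B_y$ be the components of $T-m_0$ containing $x,y$. A standard median-in-a-tree computation shows that $d(z,x)=d(z,y)$ precisely when $z=m_0$ or $z$ lies in a component of $T-m_0$ different from $B_x$ and $B_y$; hence $\mathcal{D}_T(x,y)=B_x\cup B_y$ and $|\mathcal{D}_T(x,y)|=a+b$ where $a=|B_x|$, $b=|B_y|$. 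If $\deg_T(m_0)=2$, these are the only components and $a+b=n-1\ge\varsigma(T)$; so I may assume $m_0$ is a major vertex.

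Now I would case-split on whether $B_x$ (resp.\ $B_y$) contains a major vertex of $T$. If $B_x$ contains none, then every vertex of $B_x$ has $T$-degree at most $2$, so $B_x$ is a path whose far endpoint $u$ is a leaf of $T$ terminal to $m_0$ with $l(u,m_0)=a$. If $B_y$ is also major-free, the same gives a terminal $v$ of $m_0$ in another branch, so $\ter(m_0)\ge 2$, i.e., $m_0\in\mathcal{M}(T)$, and $\varsigma(T)\le\varsigma(m_0)\le l(u,m_0)+l(v,m_0)=a+b$. Otherwise, by symmetry assume $B_x$ contains a major. The crucial lemma is then that $B_x$ in fact contains a vertex of $\mathcal{M}(T)$: let $\Sigma_x$ be the subtree of the skeleton formed by the majors lying in $B_x$, rooted at the major of $B_x$ closest to $m_0$. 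Any non-root leaf $w''$ of $\Sigma_x$ has exactly one skeleton-neighbor, its $\Sigma_x$-parent, because the $T$-path from $w''$ to $m_0$ already passes through that parent (a major), blocking skeleton-adjacency to $m_0$; and no skeleton-edge from $w''$ can leave $B_x$ without crossing the major vertex $m_0$. When $|\Sigma_x|=1$, the sole vertex itself plays the role of $w''$, being skeleton-adjacent only to $m_0$. In either case, the accounting $\ter(w'')=\deg_T(w'')-\deg_{\mathrm{skel}}(w'')\ge 3-1=2$ places $w''$ in $\mathcal{M}(T)$. Since the two terminals realizing $\varsigma(w'')$ have their terminal paths entirely inside $B_x$ (they cannot cross $m_0$), $B_x$ contains a subtree of $\varsigma(w'')+1$ vertices, yielding $a\ge\varsigma(T)+1$ and hence $|\mathcal{D}_T(x,y)|=a+b\ge\varsigma(T)$.

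The main obstacle is the skeleton lemma above: to prove it one must carefully enumerate the possible skeleton-neighbors of a major lying in $B_x$ (any neighbor outside $B_x$ would have to sit across the major $m_0$, which blocks direct skeleton-adjacency), and then apply the identity $\ter=\deg_T-\deg_{\mathrm{skel}}$ to force terminal degree at least $2$. Once this structural fact is in place, the remainder of the argument is a clean three-way case analysis on the local structure at the midpoint $m_0$.
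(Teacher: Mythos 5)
Your argument is correct, but it is worth pointing out that the paper does not actually prove Theorem~\ref{theoTreeDimensionalK}: it imports the statement from \cite{Estrada-Moreno2013} via the equivalence of $k$-partition-dimensional and $k$-metric-dimensional graphs (Theorem~\ref{theoRelationPartitionMetricDimensional}). So what you have produced is a self-contained proof of a cited result, and its organizing principle differs from the source: there the lower bound $|\mathcal{D}_T(x,y)|\ge\varsigma(T)$ is obtained by a case analysis anchored at the exterior major vertices, whereas you anchor everything at the midpoint $m_0$ of the $x$--$y$ path and use the clean identity $\mathcal{D}_T(x,y)=B_x\cup B_y$ (after disposing of odd $d(x,y)$ by parity). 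That identity is right: for $z$ in a third component or $z=m_0$ the two distances agree, while for $z\in B_x$ the path to $x$ avoids $m_0$, giving $d(z,x)<d(z,m_0)+r=d(z,y)$. Your three-way split at $m_0$ then closes every case, and the one assertion you lean on without full proof --- $\ter(w)=\deg_T(w)-\deg_{\mathrm{skel}}(w)$ --- does hold: each major-free branch of $w$ is a path contributing exactly one terminal vertex, and a branch containing a major contributes none, because the median of a leaf $u$, the vertex $w$, and a major $w'$ in that branch is either $w'$ itself or a branch vertex of degree at least three, in either case a major vertex lying on (or closer along) the $u$--$w$ path, so $u$ fails the terminal condition. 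The payoff of your route is that the final inequality is transparent ($a\ge\varsigma(T)+1$ whenever $B_x$ hides a member of $\mathcal{M}(T)$, and $a+b\ge\varsigma(m_0)$ or $a+b=n-1$ otherwise), at the cost of the skeleton bookkeeping; the cited proof avoids the skeleton but spreads the same content over more ad hoc subcases. One small presentational caveat: you invoke Theorem~\ref{coroKcota} for the upper bound, which the paper also only cites, so a fully self-contained treatment would still need to exhibit a pair $x,y$ with $|\mathcal{D}_T(x,y)|=\varsigma(T)$ (take the two vertices at equal distance $\min\{l(u,w),l(v,w)\}$ from an optimal $w$ on the two optimal terminal paths); this is easy but not free.
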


For a tree $T$ different from a path, the value of parameter $\varsigma(T)$ can be computed in linear time with respect to the order of $T$, as it was shown in \cite{Yero2017Computing}.

\section{On the $k$-partition dimension of a graph}\label{sec_k_PartitionDimension}
In this section we study the problem of computing or bounding the $k$-partition dimension of a graph. Since for any connected graph $G=(V,E)$ of order $n\ge 2$ and any $k$-partition basis $\Pi$ of $G$, we have all pairs of different vertices of $V$ are distinguished by at least $k$ vertex set of $\Pi$, it follows that $\pd_k(G)\ge k$ for any $k>1$. Particularly, $2\le\pd_1(G)\le n$. On the other hand, by definition we know that every $k$-partition basis of $G$, for $k>1$, is a $(k-1)$-partition generator of $G$. Therefore, the next result follows.

\begin{theorem}\label{trivialInequality}
Let G be a nontrivial connected graph and let $k_1,k_2$ be two integers. If $1<k_1<k_2\le\mathfrak{d}(G)$, then $k_1\le\pd_{k_1}(G)\le\pd_{k_2}(G)\le n$. Moreover, $2\le\pd_1(G)\le n$.
\end{theorem}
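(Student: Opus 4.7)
The plan is to decompose the compound inequality $k_1\le\pd_{k_1}(G)\le\pd_{k_2}(G)\le n$ into three short, essentially independent arguments, each invoking the definitions or a result already recalled in the paper. The name ``trivialInequality'' is well chosen; I do not expect any genuine obstacle, but I must be careful to justify the upper bound $\pd_{k_2}(G)\le n$ by using the hypothesis $k_2\le\mathfrak{d}(G)$.

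First, for $\pd_{k_1}(G)\ge k_1$, I would argue directly from the definition: if $\Pi$ is any $k_1$-partition generator and $x,y\in V$ are distinct, then at least $k_1$ vertex sets of $\Pi$ must distinguish them, which already forces $|\Pi|\ge k_1$. This gives the leftmost inequality. Next, for the monotonicity $\pd_{k_1}(G)\le\pd_{k_2}(G)$, I would take a $k_2$-partition basis $\Pi$ of $G$; since every pair $x,y\in V$ is distinguished by at least $k_2>k_1$ elements of $\Pi$, in particular by at least $k_1$ elements, $\Pi$ is also a $k_1$-partition generator. Therefore $\pd_{k_1}(G)\le|\Pi|=\pd_{k_2}(G)$.

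For the upper bound $\pd_{k_2}(G)\le n$, I would reuse the singleton partition $\Pi'=\{\{v\}:v\in V\}$ appearing in the proof of Theorem~\ref{theokPartitionDimensional}. That proof shows that every pair of distinct vertices is distinguished by at least $\mathfrak{d}(G)$ elements of $\Pi'$, so $\Pi'$ is a $\mathfrak{d}(G)$-partition generator. Since $k_2\le\mathfrak{d}(G)$, the partition $\Pi'$ is also a $k_2$-partition generator, whence $\pd_{k_2}(G)\le|\Pi'|=n$. This is the only spot where the hypothesis $k_2\le\mathfrak{d}(G)$ is actually used, and without it $\pd_{k_2}(G)$ would not even be defined.

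Finally, for the ``moreover'' clause, the same singleton partition $\Pi'$ is trivially a $1$-partition generator, giving $\pd_1(G)\le n$. For $\pd_1(G)\ge 2$, observe that the one-block partition $\{V\}$ satisfies $d_G(x,V)=0=d_G(y,V)$ for all $x,y\in V$, so it distinguishes no pair and hence is not a $1$-partition generator; thus any $1$-partition generator has at least two blocks, i.e.\ $\pd_1(G)\ge 2$.
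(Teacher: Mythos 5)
Your proposal is correct and follows essentially the same route as the paper, which proves the theorem in the short paragraph preceding its statement: $\pd_k(G)\ge k$ from the definition, monotonicity because a $k_2$-partition basis is in particular a $k_1$-partition generator, and the upper bound $n$ via the partition of singletons being a $\mathfrak{d}(G)$-partition generator. Your explicit justification of $\pd_1(G)\ge 2$ via the one-block partition is a detail the paper leaves implicit, but it is the same argument in spirit.
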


It was shown in \cite{Chartrand2000b} that $\pd_1(G)=2$ if and only if $G\cong P_n$, and $\pd_1(G)=n$ if and only if $G\cong K_n$. We consider the limit case of the trivial bound $\pd_{k}(G)\ge k$ for $k\ge 2$.

\begin{theorem}\label{pd_k=k}
If $G$ is a nontrivial connected graph, then $\pd_{k}(G)=k$ if and only if $k=2$ and $G\cong K_{2}$.
\end{theorem}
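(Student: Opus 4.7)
The plan is to handle the easy direction first by direct verification, then attack the ``only if'' direction by exploiting the fact that a $k$-partition basis with exactly $k$ parts forces every pair of vertices to be distinguished by every part of the partition.

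For the \emph{if} direction, $G\cong K_2$ has the obvious partition $\Pi=\{\{u\},\{v\}\}$ into two singletons, and I would simply verify that both parts distinguish the unique pair of vertices, so $\pd_2(K_2)\le 2$, which combined with the trivial lower bound $\pd_2(G)\ge 2$ gives equality.

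For the \emph{only if} direction, I would assume $\pd_k(G)=k$ and let $\Pi=\{S_1,\ldots,S_k\}$ be a $k$-partition basis. First, I would dismiss the case $k=1$ using Theorem~\ref{trivialInequality}, which already gives $\pd_1(G)\ge 2$. For $k\ge 2$, the crucial observation is that since $\Pi$ has only $k$ vertex sets and each pair of different vertices must be distinguished by at least $k$ of them, every pair of different vertices must be distinguished by \emph{all} $k$ parts. In particular, if some part $S_i$ contained two different vertices $x,y$, then $d(x,S_i)=d(y,S_i)=0$ would show that $S_i$ fails to distinguish $x,y$, a contradiction. Hence each $S_i$ is a singleton and therefore $n=k$.

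It remains to rule out $k\ge 3$ and identify $k=2$ with $K_2$. When $k=2$, we immediately get $n=2$ and, since $G$ is connected, $G\cong K_2$. When $k\ge 3$, we have $n\ge 3$ and the singleton partition condition forces every vertex of $G$ to distinguish every pair of vertices, i.e.\ $\mathcal{D}_G(x,y)=V$ for all pairs $x\ne y$, so $\mathfrak{d}(G)=n$; but Remark~\ref{remarkKPartitionDimensionaBound} states that $\mathfrak{d}(G)=n$ happens only for $G\cong K_2$, contradicting $n\ge 3$. (Alternatively, one exhibits directly a vertex $v$ with two neighbours $u,w$, noting that $v\notin\mathcal{D}_G(u,w)$.) The only step that requires a bit of care is the reduction to singletons — once that is in place, the rest follows from Remark~\ref{remarkKPartitionDimensionaBound} essentially for free, so I do not anticipate any real obstacle.
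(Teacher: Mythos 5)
Your proposal is correct and follows essentially the same route as the paper: show that a $k$-partition basis with exactly $k$ parts forces every part to be a singleton (since a part containing two vertices cannot distinguish them), deduce $k=n$, and then invoke Remark~\ref{remarkKPartitionDimensionaBound} to conclude $G\cong K_2$. Your explicit split into the cases $k=2$ and $k\ge 3$ just spells out what the paper leaves implicit.
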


\begin{proof}
It is readily seen that if $k=2$ and $G\cong K_{2}$, then the partition of singletons on vertex set of $G$ is a $2$-partition basis of $G$ with cardinality $2$.

Conversely, assume that $\pd_{k}(G)=k$. By Thereom \ref{trivialInequality}, we have that $k\ge 2$. Let $\Pi$ be a $k$-partition basis of $G=(V,E)$. If there exists one vertex set $S\in\Pi$ such that $|S|\ge 2$, then for any pair of different vertices $x,y\in S$ we need at least other $k$ vertex set in $\Pi$, which contradicts the fact that $|\Pi|=k$. Thus, $\Pi$ is the  partition of singletons on vertex set of $G$, which leads to $\pd_{n}(G)=n$. By Remark~\ref{remarkKPartitionDimensionaBound}, we conclude that $k=2$ and $G\cong K_{2}$.
\end{proof}

The existing relationship between $k$-metric dimension and $k$-partition dimension of graph for $k=1$ was shown in \cite{Chartrand2000b}.

\begin{theorem}\label{theoRelationParDim_1}{\rm \cite{Chartrand2000b}}
If $G$ is a nontrivial connected graph, then $$\pd_1(G)\le\dim_1(G)+1$$
\end{theorem}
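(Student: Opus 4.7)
The plan is to construct an explicit $1$-partition generator of cardinality $\dim_1(G)+1$ from a $1$-metric basis of $G$. Let $W=\{v_1,\ldots,v_d\}$ be a $1$-metric basis of $G$, so $d=\dim_1(G)$. Form the partition $\Pi=\{S_1,\ldots,S_d,S_{d+1}\}$ where $S_i=\{v_i\}$ for $i\in\{1,\ldots,d\}$ and $S_{d+1}=V\setminus W$. (If $W=V$, discard the empty $S_{d+1}$; this already gives the partition of singletons and only strengthens the conclusion.) Clearly $|\Pi|\le d+1$.

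The verification that $\Pi$ is a $1$-partition generator splits into two cases, using the observation recorded just after the definition of $k$-partition generator in Section~\ref{sectPrelim}. First, if $x,y$ lie in different parts of $\Pi$, they are automatically distinguished by those two parts, so nothing needs to be checked. Second, if $x,y$ lie in the same part, then necessarily $x,y\in S_{d+1}$, because every other part is a singleton; in particular $x,y\notin W$. Since $W$ is a $1$-metric generator, there exists $v_i\in W$ with $d_G(x,v_i)\ne d_G(y,v_i)$. Because $S_i=\{v_i\}$ is a singleton and $x,y\notin S_i$, we have $d_G(x,S_i)=d_G(x,v_i)\ne d_G(y,v_i)=d_G(y,S_i)$, so $S_i$ distinguishes $x$ and $y$.

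This shows $\Pi$ distinguishes every pair of distinct vertices by at least one part, hence $\pd_1(G)\le |\Pi|\le \dim_1(G)+1$, which is the desired inequality. The construction is essentially forced, and no real obstacle arises; the only subtle point is remembering that an element $v_i\in W$ which distinguishes $x,y$ in the metric sense automatically distinguishes them in the partition sense precisely because $S_i$ is a singleton not containing $x$ or $y$, so $d_G(\cdot,S_i)$ coincides with $d_G(\cdot,v_i)$ on $\{x,y\}$.
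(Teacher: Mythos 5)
Your proof is correct, and it is essentially the same construction the paper uses in its proof of the more general Theorem~\ref{theoRelationParDim_k} (of which this statement is the case $k=1$; the paper itself only cites this result without reproving it): singletons on a metric basis $W$ plus the leftover block $V\setminus W$. Your explicit handling of the two cases, including the observation that a singleton part $S_i=\{v_i\}$ distinguishes exactly the pairs that $v_i$ distinguishes, fills in the details the paper leaves implicit.
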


This upper bound, although in general is true, is never achieved for $k\ge 2$ in some cases as we will see below.

\begin{theorem}\label{theoRelationParDim_k}
If $G=(V,E)$ is a connected graph of order $n\ge 2$, then for any $k\in\{1,2,\ldots,\mathfrak{d}(G)-1\}$ 
$$\pd_k(G)\le\dim_k(G)+1.$$ Moreover, if $\dim_{\mathfrak{d}(G)}(G)<n$, then $\pd_{\mathfrak{d}(G)}(G)\le\dim_{\mathfrak{d}(G)}(G)+1$, otherwise, $\pd_{\mathfrak{d}(G)}(G)\le\dim_{\mathfrak{d}(G)}(G)$.
\end{theorem}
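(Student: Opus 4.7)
The plan is to exploit the simple observation that a singleton vertex set distinguishes a pair of vertices (in the partition sense) exactly when the single vertex inside it distinguishes them (in the metric sense). Given a $k$-metric basis $B = \{v_1, \ldots, v_m\}$ of $G$ with $m = \dim_k(G)$, I would build a $k$-partition generator by promoting each $v_i$ to its own singleton part and placing the remaining vertices, if any, into one additional part. Concretely, if $V \setminus B \neq \emptyset$, set
$$\Pi = \{\{v_1\}, \{v_2\}, \ldots, \{v_m\}, V \setminus B\},$$
a partition with $m + 1$ parts; if $V \setminus B = \emptyset$ (so $m = n$), take $\Pi$ to be the partition of $V$ into all singletons, with $n$ parts.

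To verify that $\Pi$ is a $k$-partition generator in either case, note that for any singleton $\{v_i\}$ and any $z \in V$ one has $d(z, \{v_i\}) = d(z, v_i)$, so the part $\{v_i\}$ distinguishes a pair $x, y$ if and only if the vertex $v_i$ does. Since $B$ is a $k$-metric generator, every pair of distinct vertices of $V$ is distinguished by at least $k$ vertices of $B$, and the corresponding $k$ singleton parts then distinguish the pair inside $\Pi$. This gives $\pd_k(G) \leq m + 1$ in the first case and $\pd_k(G) \leq m$ in the second, which is exactly the conclusion once the two cases are matched to the ranges of $k$ in the statement.

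The one loose end is arguing that the first case applies whenever $k \leq \mathfrak{d}(G) - 1$. For this I would show that removing any single vertex $v$ from $V$ still leaves a $k$-metric generator: for distinct $x, y$ we have
$$|\mathcal{D}_G(x,y) \cap (V \setminus \{v\})| \geq |\mathcal{D}_G(x,y)| - 1 \geq \mathfrak{d}(G) - 1 \geq k,$$
so at least $k$ distinguishing vertices remain. Hence $\dim_k(G) \leq n - 1$ whenever $k < \mathfrak{d}(G)$, forcing $V \setminus B$ to be nonempty, so the first case applies. For $k = \mathfrak{d}(G)$ the dichotomy $\dim_{\mathfrak{d}(G)}(G) < n$ versus $\dim_{\mathfrak{d}(G)}(G) = n$ corresponds directly to whether $V \setminus B$ is nonempty, and the two conclusions of the theorem drop out of the construction above.

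I do not expect a genuine obstacle: the proof is essentially a packaging trick, and the only subtlety is the small auxiliary observation that $V$ minus any single vertex remains a $k$-metric generator for $k < \mathfrak{d}(G)$, which is what guarantees a vertex outside $B$ is available to form the leftover part.
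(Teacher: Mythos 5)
Your proof is correct and follows essentially the same route as the paper: singleton parts for each vertex of a $k$-metric basis plus one leftover part (or the all-singletons partition when the basis is all of $V$). The only difference is that you prove the auxiliary fact $\dim_k(G)<n$ for $k<\mathfrak{d}(G)$ directly via the inequality $|\mathcal{D}_G(x,y)\cap(V\setminus\{v\})|\ge\mathfrak{d}(G)-1\ge k$, whereas the paper simply cites this from earlier work; your argument for it is valid.
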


\begin{proof}
Let $\dim_k(G)=s$ and let $W=\{w_1,w_2,\ldots,w_s\}$ be a $k$-metric basis of $G$. It was shown in \cite{Estrada-Moreno2013} that for $k\in\{1,2,\ldots,\mathfrak{d}(G)-1\}$ we have that $\dim_k(G)<n$. Firstly, we assume that either $k\in\{1,2,\ldots,\mathfrak{d}(G)-1\}$ or $k=\mathfrak{d}(G)$ and $\dim_{\mathfrak{d}(G)}(G)<n$, and as a consequence, $W\ne V$. We consider the partition $\Pi=\{S_1,S_2,\ldots,S_{s+1}\}$ of $V$, where $S_i=\{w_i\}$ for all $i\in\{1,\ldots,s\}$ and $S_{s+1}=V-W\ne\emptyset$. Since $W$ is a $k$-metric basis of $G$ all pairs of different vertices are distinguished by at least $k$ vertex sets of $\Pi-S_{s+1}$. Therefore, $\Pi$ is a $k$-partition generator of $G$, and as a consequence, $\pd_k(G)\le|\Pi|=\dim_k(G)+1$. 

Suppose that $k=\mathfrak{d}(G)$ and $\dim_{\mathfrak{d}(G)}(G)=n$. In this case we take the partition of singletons $\Pi'=\{S_1,S_2,\ldots,S_n\}$ on $V$. Since $W$ is a $k$-metric basis of $G$ all pairs of different vertices are distinguished by at least $k$ vertex sets of $\Pi'$. Therefore, $\Pi'$ is a $k$-partition generator of $G$, and as a consequence, $\pd_{\mathfrak{d}(G)}(G)\le|\Pi'|=\dim_{\mathfrak{d}(G)}(G)$.
\end{proof}

It was shown in \cite{Chartrand2000b} that the upper bound in Theorem~\ref{theoRelationParDim_k} is sharp for the graphs $P_n$, $C_n$, $K_n$ and $K_{1,n}$ when $k=1$. We will show other graphs that achieve the upper bound of Theorem~\ref{theoRelationParDim_k} for $k>1$. To this end, we present some results obtained previously.

\begin{theorem}\label{TrivialUpperBound}{\rm \cite{Estrada-Moreno2016k-metric}}
For any connected graph $G$ of order $n$ and any $k\in \{1,\dots,\mathfrak{d}(G)\}$,
$$\dim_k(G)\le n-\mathfrak{d}(G)+k.$$
\end{theorem}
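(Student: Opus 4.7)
The plan is to exhibit, for each $k \in \{1, \ldots, \mathfrak{d}(G)\}$, a concrete $k$-metric generator of size at most $n - \mathfrak{d}(G) + k$. Equivalently, I will show that one can throw away $\mathfrak{d}(G) - k$ vertices from $V$ and still retain a $k$-metric generator. The key observation driving the argument is that every pair of distinct vertices contributes at least $\mathfrak{d}(G)$ distinguishing vertices (by the very definition of $\mathfrak{d}(G)$), so there is slack of $\mathfrak{d}(G) - k$ in every distinguishing set.

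First, I would fix a ``tight'' pair: by definition of $\mathfrak{d}(G)$ there exist two different vertices $x, y \in V$ with $|\mathcal{D}_G(x,y)| = \mathfrak{d}(G)$. Let $R$ be an arbitrary subset of $\mathcal{D}_G(x,y)$ with $|R| = \mathfrak{d}(G) - k$; note that when $k = \mathfrak{d}(G)$ we simply take $R = \emptyset$. Set $W := V \setminus R$, so $|W| = n - \mathfrak{d}(G) + k$.

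Second, I would verify that $W$ is a $k$-metric generator. For any pair of distinct vertices $u, v \in V$, the definition of $\mathfrak{d}(G)$ yields $|\mathcal{D}_G(u,v)| \ge \mathfrak{d}(G)$. Since $|\mathcal{D}_G(u,v) \cap R| \le |R| = \mathfrak{d}(G) - k$, it follows that
\[
|\mathcal{D}_G(u,v) \cap W| \;\ge\; |\mathcal{D}_G(u,v)| - |R| \;\ge\; \mathfrak{d}(G) - (\mathfrak{d}(G) - k) \;=\; k,
\]
so $u,v$ are distinguished by at least $k$ vertices of $W$. Hence $W$ is a $k$-metric generator, and $\dim_k(G) \le |W| = n - \mathfrak{d}(G) + k$.

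There is no real obstacle in this proof: the entire content is the counting inequality above, and the one small idea is to place the removed vertices $R$ entirely inside the distinguishing set of a pair that achieves the minimum, which is automatic since the bound on $|\mathcal{D}_G(u,v) \cap R|$ is only $|R|$ and does not depend on where $R$ sits. The only point worth flagging in the write-up is the boundary case $k = \mathfrak{d}(G)$, where $R = \emptyset$ and the bound degenerates to $\dim_{\mathfrak{d}(G)}(G) \le n$, which is of course trivial but consistent with the statement.
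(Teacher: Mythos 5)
Your proof is correct: since every pair $u,v$ satisfies $|\mathcal{D}_G(u,v)|\ge\mathfrak{d}(G)$, deleting any $\mathfrak{d}(G)-k$ vertices leaves a $k$-metric generator, which is exactly the counting argument behind this bound. The paper itself imports the theorem from the cited reference without proof, but your argument is essentially the standard one given there; the only cosmetic remark is that, as you yourself observe, anchoring $R$ inside $\mathcal{D}_G(x,y)$ for a minimizing pair is unnecessary --- any $R$ with $|R|=\mathfrak{d}(G)-k$ works.
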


By Theorems \ref{theoRelationParDim_k} and \ref{TrivialUpperBound} we deduce the next result.

\begin{proposition}
If $G$ is a connected graph of order $n\ge 2$, then for any $k\in\{1,\ldots,\mathfrak{d}(G)-1\}$
$$\pd_k(G)\le n-\mathfrak{d}(G)+k + 1.$$
Moreover, if $\pd_k(G)=n$, then $k\in\{\mathfrak{d}(G)-1,\mathfrak{d}(G)\}$. 
\end{proposition}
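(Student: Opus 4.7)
The plan is to chain the two cited inequalities directly. First, for any $k\in\{1,\ldots,\mathfrak{d}(G)-1\}$, Theorem~\ref{theoRelationParDim_k} gives $\pd_k(G)\le\dim_k(G)+1$, and Theorem~\ref{TrivialUpperBound} yields $\dim_k(G)\le n-\mathfrak{d}(G)+k$. Combining these two estimates immediately produces $\pd_k(G)\le n-\mathfrak{d}(G)+k+1$, which is the first claimed bound. This is a one-line derivation, so no subtlety arises here.

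For the moreover part, I would argue by contraposition. Suppose $k\le \mathfrak{d}(G)-2$; then $k$ lies in the range covered by the first inequality, and substitution gives $\pd_k(G)\le n-\mathfrak{d}(G)+k+1\le n-\mathfrak{d}(G)+(\mathfrak{d}(G)-2)+1=n-1$. Hence $\pd_k(G)\ne n$ whenever $k\le\mathfrak{d}(G)-2$. Equivalently, if $\pd_k(G)=n$, then $k\ge\mathfrak{d}(G)-1$; combined with the natural upper limit $k\le\mathfrak{d}(G)$ for the parameter to be defined, this forces $k\in\{\mathfrak{d}(G)-1,\mathfrak{d}(G)\}$.

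Since both steps are essentially algebraic consequences of results already stated in the paper, there is no real obstacle; the only mild care needed is to note that the case $k=\mathfrak{d}(G)$ need not be handled separately in the first inequality (the range of the statement excludes it), and that the second assertion only requires the first inequality in its full generality on $\{1,\dots,\mathfrak{d}(G)-1\}$, which is already at hand.
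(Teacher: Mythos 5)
Your proof is correct and follows exactly the route the paper intends: the paper states this proposition with no written proof beyond ``By Theorems~\ref{theoRelationParDim_k} and \ref{TrivialUpperBound} we deduce the next result,'' and your chaining of those two bounds plus the contrapositive argument for the ``moreover'' part is precisely that deduction spelled out.
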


Given a connected graph $G=(V,E)$, we define the following parameter $$\mathfrak{d}^*(G)=\displaystyle\max_{x,y\in V,x\ne y}\{|\mathcal{D}_G(x,y)|\}.$$ It can be proved that the time complexity of computing $\mathfrak{d}^*(G)$ is $O(n^3)$ applying a procedure similar to that used in \cite{Yero2017Computing} to prove the time complexity of computing $\mathfrak{d}(G)$.

\begin{theorem}\label{necessaryConditionPD=n}
For any connected graph $G$ of order $n\ge 2$ and any $k\in \{1,\dots,\mathfrak{d}(G)\}$ such that $\mathfrak{d}^*(G)\le k+1$, we have that $\pd_k(G)=n$. Moreover, if $k\in\{1,2\}$, then $\pd_k(G)=n$ if and only if $\mathfrak{d}^*(G)\le k+1$.
\end{theorem}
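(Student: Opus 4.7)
The plan is to prove the two statements separately, using Corollary~\ref{remarkDPI} for the forward direction and the ``same-part-only'' reduction noted in Section~\ref{sectPrelim} for the reverse.

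For the forward direction, assume $\mathfrak{d}^*(G)\le k+1$ and let $\Pi$ be any $k$-partition basis of $G$. I would show that no part of $\Pi$ can contain two distinct vertices, so that $\Pi$ must be the partition of singletons, which together with the trivial upper bound $\pd_k(G)\le n$ (valid whenever $k\le\mathfrak{d}(G)$, since the partition of singletons distinguishes any pair $x,y$ by $|\mathcal{D}_G(x,y)|\ge\mathfrak{d}(G)\ge k$ parts) gives $\pd_k(G)=n$. For the impossibility step, suppose some part $S\in\Pi$ contains distinct $x,y$. Then $d_G(x,S)=d_G(y,S)=0$, so $S$ itself does not distinguish $x,y$; hence there exist $k$ other parts $S_{i_1},\ldots,S_{i_k}$ of $\Pi$ that do. By Corollary~\ref{remarkDPI}, each $S_{i_j}$ meets $\mathcal{D}_G(x,y)$, and since $S_{i_1},\ldots,S_{i_k}$ are pairwise disjoint and disjoint from $\{x,y\}\subseteq S$, they contribute $k$ distinct elements of $\mathcal{D}_G(x,y)\setminus\{x,y\}$. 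Therefore $|\mathcal{D}_G(x,y)|\ge k+2$, contradicting $\mathfrak{d}^*(G)\le k+1$.

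For the converse when $k\in\{1,2\}$, I would prove the contrapositive: if $\mathfrak{d}^*(G)\ge k+2$, then $\pd_k(G)\le n-1$. Pick a pair $x,y$ with $|\mathcal{D}_G(x,y)|=\mathfrak{d}^*(G)\ge k+2$ and consider the partition $\Pi=\{\{x,y\}\}\cup\{\{v\}:v\in V\setminus\{x,y\}\}$, which has cardinality $n-1$. As emphasized in Section~\ref{sectPrelim}, for $k\in\{1,2\}$ the $k$-partition generator condition need only be verified for pairs of vertices lying in the same part, and the only such pair in $\Pi$ is $\{x,y\}$. The singletons $\{v\}$ with $v\in\mathcal{D}_G(x,y)\setminus\{x,y\}$ each distinguish $x,y$, and there are at least $k$ of them, so $\Pi$ is a $k$-partition generator. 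Hence $\pd_k(G)\le n-1<n$, as desired.

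The routine part is the bookkeeping of the two directions; the genuine conceptual point is that the reverse implication leans on the fact that for $k\le 2$ one only needs to separate intra-part pairs. This is exactly why the equivalence is stated only for $k\in\{1,2\}$: merging a single pair $\{x,y\}$ into one class could create many new cross-part obligations that the $k\ge 3$ condition would demand, but that are automatic for $k\le 2$. I expect no serious obstacle beyond recognising this asymmetry and being careful that $k\le\mathfrak{d}(G)$ (guaranteed in both directions: in the forward direction by hypothesis, and in the converse by Remark~\ref{remarkKPartitionDimensionaBound} which gives $\mathfrak{d}(G)\ge 2\ge k$).
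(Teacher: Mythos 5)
Your proposal is correct and follows essentially the same route as the paper: the forward direction rules out any non-singleton partition by observing that a part containing two vertices $x,y$ leaves at most $k-1$ other parts meeting $\mathcal{D}_G(x,y)$ (you make the paper's one-line counting explicit via Corollary~\ref{remarkDPI}), and the converse for $k\in\{1,2\}$ uses the same partition $\{\{x,y\}\}\cup\{\{v\}:v\in V\setminus\{x,y\}\}$ together with the same-part reduction. No gaps; your remark on why the equivalence is restricted to $k\le 2$ matches the paper's reasoning.
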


\begin{proof}
Since $k\le\mathfrak{d}(G)\le \mathfrak{d}^*(G)\le k+1$ it follows that $k\in\{\mathfrak{d}(G)-1, \mathfrak{d}(G)\}$. On the other hand, considering the partition of singletons on $V$ is a $k$-partition generator of $G$, it follows that $\pd_k(G)\le n$. Finally, since $\mathfrak{d}^*(G)\le k+1$, any partition $\Pi$ on $V$, different from the partition of singletons on $V$, has at least one element with two vertices that are distinguished by at most $k-1$ elements of $\Pi$. Therefore, $\pd_k(G)\ge n$, which leads to $\pd_k(G)=n$.

Suppose that $k\in\{1,2\}$ and $\pd_k(G)=n$. Assume for the purpose of contradiction that $\mathfrak{d}^*(G)\ge k+2$. Let $x,y$ be two vertices of $G$ such that $|\mathcal{D}_G(x,y)|=\mathfrak{d}^*(G)$. Since for any $k\in\{1,2\}$ we have that any partition $\Pi$ of $V$ is a $k$-partition generator of $G$ if every pair of different vertices belonging to a same vertex set of $\Pi$ is distinguished by other $k$ vertex sets of $\Pi$. Thus, if $\mathfrak{d}^*(G)\ge k+2$, then the partition $\displaystyle\Pi'=\bigcup_{z\in V-\{x,y\}}\{z\}\cup\{x,y\}$ is $k$-partition generator of $G$, which is a contradiction. Therefore, $\mathfrak{d}^*(G)\le k+1$ for $k\in\{1,2\}$.
\end{proof}

\begin{corollary}\label{necessaryConditionPD=nCor}
If $G$ is a nontrivial connected graph such that $\mathfrak{d}(G)=\mathfrak{d}^*(G)$, then $\pd_k(G)=n$ for any $k\in\{\mathfrak{d}(G)-1, \mathfrak{d}(G)\}$.
\end{corollary}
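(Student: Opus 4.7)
The plan is to deduce this corollary as an immediate consequence of Theorem~\ref{necessaryConditionPD=n}. That theorem asserts that whenever $k\in\{1,\dots,\mathfrak{d}(G)\}$ and $\mathfrak{d}^*(G)\le k+1$, one has $\pd_k(G)=n$. So my whole task reduces to checking, under the hypothesis $\mathfrak{d}(G)=\mathfrak{d}^*(G)$, that both values $k=\mathfrak{d}(G)$ and $k=\mathfrak{d}(G)-1$ fall in the admissible range for $k$ and satisfy $\mathfrak{d}^*(G)\le k+1$.

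First I would handle the range requirement. Since $G$ is nontrivial and connected, Remark~\ref{remarkKPartitionDimensionaBound} gives $\mathfrak{d}(G)\ge 2$, so both $\mathfrak{d}(G)$ and $\mathfrak{d}(G)-1$ lie in $\{1,\dots,\mathfrak{d}(G)\}$, and Theorem~\ref{necessaryConditionPD=n} is applicable in principle. Next I would verify the inequality $\mathfrak{d}^*(G)\le k+1$ in each case: for $k=\mathfrak{d}(G)$ it reads $\mathfrak{d}^*(G)\le \mathfrak{d}(G)+1$, which holds because by assumption $\mathfrak{d}^*(G)=\mathfrak{d}(G)$; and for $k=\mathfrak{d}(G)-1$ it reads $\mathfrak{d}^*(G)\le \mathfrak{d}(G)$, which is again the hypothesis (in fact with equality). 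With both hypotheses of Theorem~\ref{necessaryConditionPD=n} in hand, I can invoke it directly to conclude $\pd_k(G)=n$ for each such $k$.

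There is really no obstacle here: the corollary is a packaging of Theorem~\ref{necessaryConditionPD=n} in the special situation where the maximum and minimum of $|\mathcal{D}_G(x,y)|$ coincide, forcing every distinguishing set to have exactly this common cardinality. The only minor point to keep in mind is the edge case $G\cong K_2$, where $\mathfrak{d}(G)=\mathfrak{d}^*(G)=2$ and $\mathfrak{d}(G)-1=1$; the statement still gives $\pd_1(K_2)=\pd_2(K_2)=2=n$, consistent with Theorem~\ref{pd_k=k}, so no separate treatment is required.
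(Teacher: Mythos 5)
Your proposal is correct and follows exactly the route the paper intends: the corollary is stated as an immediate consequence of Theorem~\ref{necessaryConditionPD=n}, and your verification that both $k=\mathfrak{d}(G)$ and $k=\mathfrak{d}(G)-1$ lie in the admissible range and satisfy $\mathfrak{d}^*(G)\le k+1$ under the hypothesis $\mathfrak{d}(G)=\mathfrak{d}^*(G)$ is precisely the (implicit) argument. The remark on the $K_2$ edge case is a nice extra consistency check but not needed.
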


Examples of graphs that satisfy Corollary~\ref{necessaryConditionPD=nCor} are $K_n$, $C_{2k+1}$, where $k$ is a positive integer, and the wheel graph $W_{1,5}$. It can be noted that $\mathfrak{d}(K_n)=\mathfrak{d}^*(K_n)=2$, $\mathfrak{d}(C_{2k+1})=\mathfrak{d}^*(C_{2k+1})=2k$, and finally, $\mathfrak{d}(W_{1,5})=\mathfrak{d}^*(W_{1,5})=4$. Particularly, graphs that hold Theorem~\ref{necessaryConditionPD=n} such that $\mathfrak{d}(G)\ne\mathfrak{d}^*(G)$, we have $K_n-e$ for $k=2$, paths $P_n$ ($n\ge 3$) for $k=n-1$ and finally, the fan graph $F_{1,4}$ for $k=3$. We recall that the \emph{wheel graph} $W_{1,n}$ is equal to $K_1+C_n$ and the \emph{fan graph} $F_{1,n}$ is equal to $K_1+P_n$, where $G+H$ represents the \emph{join graph} between the graphs $G=(V_{1},E_{1})$ and $H=(V_{2},E_{2})$ whose vertex set is $V(G+H)=V_{1}\cup V_{2}$ and edge set is $E(G+H)=E_{1}\cup E_{2}\cup \{\{u,v\}\,:\,u\in V_{1},v\in V_{2}\}$.

The following two results were obtained previously, but the proofs, especially for Proposition~\ref{PropAllVertices2}, are reduced by applying the new approach presented in Theorem~\ref{necessaryConditionPD=n}.

\begin{proposition}{\rm \cite{Chartrand2000b}}
If $G$ is a connected graph of order $n\ge 2$, then $\pd_1(G)=n$ if and only if $G\cong K_n$. 
\end{proposition}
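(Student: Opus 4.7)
The plan is to route through Theorem \ref{necessaryConditionPD=n} applied to $k=1$, which yields the equivalence $\pd_1(G) = n \iff \mathfrak{d}^*(G) \le 2$. Since $x, y \in \mathcal{D}_G(x, y)$ automatically gives $\mathfrak{d}^*(G) \ge 2$, the proposition reduces to showing $\mathfrak{d}^*(G) = 2 \iff G \cong K_n$. The backward direction is immediate: in $K_n$ every vertex is at distance $1$ from every other, so for any distinct $x, y$ and any third vertex $z$ we have $d(x, z) = 1 = d(y, z)$, which gives $\mathcal{D}_G(x, y) = \{x, y\}$ for every pair, and hence $\mathfrak{d}^*(K_n) = 2$.

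For the forward direction, I assume $\mathfrak{d}^*(G) = 2$, so $|\mathcal{D}_G(x, y)| = 2$ for \emph{every} pair of distinct vertices. As recorded just before the statement of Theorem \ref{theokPartitionDimensional}, this is precisely the condition that every pair of distinct vertices of $G$ are twins. I plan to combine this with connectivity to force $G \cong K_n$ by contradiction. Suppose some pair $x, y$ is non-adjacent. Since $x, y$ are twins but $x \not\sim y$, the option $N[x] = N[y]$ is excluded (it would put $y \in N[x]$ and therefore make $x \sim y$), so $N(x) = N(y)$. Connectivity together with $n \ge 2$ provides $x$ with some neighbour $z$; then $z \in N(x) = N(y)$, so $y \sim z$. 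Now $x, z$ are twins with $x \sim z$, which rules out $N(x) = N(z)$ and forces $N[x] = N[z]$. Applying this equality to $y$, which is distinct from both $x$ and $z$ (since $y \not\sim x$ while $z \sim x$), I obtain the biconditional $y \sim x \iff y \sim z$, which contradicts $y \not\sim x$ together with $y \sim z$.

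The main obstacle I anticipate is precisely this last step: translating ``all pairs are twins'' plus connectivity into ``$G \cong K_n$''. The subtlety is that the twin relation has two flavours, $N(u) = N(v)$ (open twins, which are non-adjacent) and $N[u] = N[v]$ (closed twins, which are adjacent), so a purely local two-vertex argument is not enough. The three-vertex configuration used above is designed to handle exactly this: connectivity forces an open-twin pair $(x, y)$ to coexist with a closed-twin pair $(x, z)$, and propagating the closed-twin equality $N[x] = N[z]$ to the third vertex $y$ produces the direct contradiction that closes the proof.
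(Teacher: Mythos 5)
Your proposal is correct and follows essentially the same route as the paper's own proof, which likewise applies Theorem~\ref{necessaryConditionPD=n} with $k=1$ to reduce the statement to the equivalence $\mathfrak{d}^*(G)=2\iff G\cong K_n$. The only difference is that the paper dismisses this last equivalence as ``readily seen,'' whereas you supply the missing detail via the twin characterization of $\mathcal{D}_G(x,y)=\{x,y\}$, and your three-vertex argument handling the open-twin/closed-twin dichotomy is sound.
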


\begin{proposition}{\rm \cite{ChaudhryJavaidSalman2010}}\label{PropAllVertices2}
If $G$ is a connected graph of order $n\ge 2$, then $\pd_2(G)=n$ if and only if $G\cong K_n$ or $G\cong K_n-e$. 
\end{proposition}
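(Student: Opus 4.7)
The plan is to invoke Theorem~\ref{necessaryConditionPD=n} at $k=2$, which instantly reduces the statement to the characterization ``$\mathfrak{d}^*(G)\le 3$ if and only if $G\cong K_n$ or $G\cong K_n-e$''. The ``if'' direction is a one-line computation: in $K_n$ every pair $x,y$ is twin so $\mathcal{D}_G(x,y)=\{x,y\}$ and $\mathfrak{d}^*(K_n)=2$; in $K_n-e$ with $e=uv$, the pair $\{u,v\}$ still satisfies $\mathcal{D}_G(u,v)=\{u,v\}$, while for any $w\notin\{u,v\}$ the set $\mathcal{D}_G(u,w)$ picks up exactly the extra vertex $v$, giving $\mathfrak{d}^*(K_n-e)=3$.

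For the converse, I would assume $\mathfrak{d}^*(G)\le 3$ and show that $G$ has at most one non-edge. First, I would rule out large diameter: a shortest $u$--$v$ path of length $d\ge 3$, written $u=z_0,z_1,\ldots,z_d=v$, satisfies $d(z_i,u)=i$ and $d(z_i,v)=d-i$, so $z_1$ and $z_{d-1}$ distinguish $u$ from $v$ in addition to $u,v$ themselves, forcing $|\mathcal{D}_G(u,v)|\ge 4$. Hence $\operatorname{diam}(G)\le 2$, and for every pair of distinct vertices $x,y$ one has
\[|\mathcal{D}_G(x,y)|=2+\bigl|(N(x)\triangle N(y))\setminus\{x,y\}\bigr|,\]
which turns the task into a local adjacency count.

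Supposing for contradiction that $G$ admits two distinct non-edges, I would split into cases. In Case~(i) the two non-edges share a vertex $u$, so $u$ has two non-neighbors $v,w$: if some $x\in N(u)$ satisfies $x\sim v$ and $x\sim w$, then $\{u,x,v,w\}\subseteq\mathcal{D}_G(u,x)$; otherwise, pick $a\in N(u)\cap N(v)$ and $b\in N(u)\cap N(w)$ (available because $\operatorname{diam}(G)\le 2$), note that the ``no-good-neighbor'' hypothesis forces $a\not\sim w$ and $b\not\sim v$, and conclude that $v,w$ both lie in $(N(a)\triangle N(b))\setminus\{a,b\}$, so $|\mathcal{D}_G(a,b)|\ge 4$. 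In Case~(ii) the two non-edges are vertex-disjoint, say $uv$ and $wx$: if $v\sim w$ and $u\sim x$, then the pair $(u,w)$ is distinguished by the four distinct vertices $u,w,v,x$; and if either $v\not\sim w$ or $u\not\sim x$, some vertex of $\{u,v,w,x\}$ acquires two non-neighbors, reducing to Case~(i). In every subcase this contradicts $\mathfrak{d}^*(G)\le 3$, so $G$ has $0$ or $1$ non-edge, i.e., $G\cong K_n$ or $G\cong K_n-e$. The main obstacle is keeping this case analysis tidy—in particular verifying in Case~(i) that the auxiliary pair $(a,b)$ consists of distinct vertices outside $\{u,v,w\}$ and that the required non-adjacencies $a\not\sim w$, $b\not\sim v$ genuinely follow from the ``no-good-neighbor'' hypothesis.
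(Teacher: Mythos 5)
Your proposal is correct and follows essentially the same route as the paper: both reduce the claim to the characterization of $\mathfrak{d}^*(G)\le 3$ via Theorem~\ref{necessaryConditionPD=n}, rule out diameter at least $3$ with a shortest-path argument, and then derive a contradiction from two distinct non-edges by exhibiting four distinguishing vertices, splitting into the shared-vertex case (the paper's ``two vertices at distance two from $x$'') and the disjoint case (the paper's ``two diametrical pairs''). Your symmetric-difference formula for $|\mathcal{D}_G(x,y)|$ in diameter-two graphs is a tidy repackaging of the same counts, not a different argument.
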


\begin{proof}
Suppose that $\pd_1(G)=n$. By Theorem~\ref{necessaryConditionPD=n}, $\pd_1(G)=n$ if and only if  $\mathfrak{d}^*(G)\le 2$. Since $2\le\mathfrak{d}(G)\le\mathfrak{d}^*(G)\le 2$, it follows that $\mathfrak{d}(G)=\mathfrak{d}^*(G)=2$. It is readily seen that $\mathfrak{d}^*(G)=2$ if and only if $G\cong K_n$.

Assume that $\pd_2(G)=n$. By Theorem~\ref{necessaryConditionPD=n}, $\pd_2(G)=n$ if and only if  $\mathfrak{d}^*(G)\le 3$. Thus, either $\mathfrak{d}^*(G)=2$ or $\mathfrak{d}^*(G)=3$. Since $\mathfrak{d}^*(G)=2$ if and only if $G\cong K_n$, from now on we assume that $\mathfrak{d}^*(G)=3$. Hence, the diameter $d$ of $G$ is at least two. Suppose for the purpose of contradiction that $d\ge 3$, and let $v_0,v_1,\ldots,v_d$ a path in $G$ of length $d$ such that $v_0$ and $v_d$ are diametrical vertices. Since $\mathfrak{d}^*(G)\ge |\mathcal{D}_G(v_0,v_1)|\ge |\{v_0,v_1,\ldots,v_d\}|\ge 4$, this leads to a contradiction. Hence $d=2$. Let $x,y$ be two vertices of $G$ such that $d(x,y)=2$. Suppose that there exists another vertex $y'$ such that $y'\ne y$ and $d(x,y')=2$. If there exists a vertex $z$ such that $d(x,z)=d(z,y)=d(z,y')=1$, then $|\mathcal{D}_G(x,z)|\ge |\{x,z,y,y'\}|\ge 4$, which is another contradiction. Thus, for every vertex $z$ such that $d(x,z)=d(z,y)=1$ we have that $d(z,y')=2$. Analogously, for every  $z'$ such that $d(x,z')=d(z',y')=1$ we have that $d(z',y)=2$. Let $z,z'$ be two vertices of $G$ such that $d(x,z)=d(z,y)=1$, $d(z,y')=2$, $d(x,z')=d(z',y')=1$ and $d(z',y)=2$. In this case $|\mathcal{D}_G(y,y')|\ge |\{y,y',z,z'\}|\ge 4$, which is a contradiction again. Thus, for any diametrical vertex, there exists only one vertex which is at distance two from it. Suppose that in addition to $x,y$ there exist other two diametrical vertices $x',y'$. Then $d(x,x')=d(x,y')=1$ and $d(y,x')=d(y,y')=1$. In this case $|\mathcal{D}_G(x,x')|\ge |\{x,x',y,y'\}|\ge 4$, which is also a contradiction. Therefore, $G$ is a graph of diameter $2$ that has only one pair of diametrical vertices, i.e., $G\cong K_n-e$. It is straightforward to check that $\mathfrak{d}^*(K_n-e)=3$, and with this the proof is completed.
\end{proof}

\subsection{The particular case of trees}

In this subsection we focus our study in $k$-partition dimension of a tree. Firstly, we study the paths  which are the simplest trees. 

\begin{proposition}\label{pathOrderAtLeast3}
If $P_n$ is a path of order $n\ge 3$, then $\pd_k(P_n)=k+1$ for any $k\in\{1,2,\ldots,n-1\}$.
\end{proposition}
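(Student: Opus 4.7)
The proof splits into matching lower and upper bounds; label the vertices of $P_n$ as $v_1,\ldots,v_n$ along the path in the usual way. For the lower bound I would combine the general inequality $\pd_k(G)\ge k$ of Theorem~\ref{trivialInequality} (or $\pd_1(G)\ge 2$ when $k=1$) with Theorem~\ref{pd_k=k}, which identifies the equality case $\pd_k(G)=k$ with $k=2$ and $G\cong K_2$. Since $n\ge 3$ that case is excluded, so $\pd_k(P_n)\ge k+1$ throughout the stated range.

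For the upper bound I would exhibit the explicit partition
$$\Pi_k \;=\; \bigl\{\{v_1\},\{v_2\},\ldots,\{v_k\},\,S\bigr\},\qquad S=\{v_{k+1},\ldots,v_n\},$$
which is well-defined with exactly $k+1$ parts because $k\le n-1$ guarantees $S\ne\emptyset$. The verification that $\Pi_k$ is a $k$-partition generator rests on two arithmetic facts inside a path: the singleton $\{v_l\}$ fails to distinguish $v_a$ from $v_b$ exactly when $l=(a+b)/2$, and $d_{P_n}(v_a,S)=\max\{0,k+1-a\}$. Using these I would split the pairs of distinct vertices into three cases: (i) both vertices in $\{v_1,\ldots,v_k\}$; (ii) one vertex is some $v_i$ with $i\le k$ and the other lies in $S$; (iii) both vertices in $S$.

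In cases (i) and (ii) at most one of the $k$ singletons can fail, while $S$ always distinguishes (its distance to a vertex of $\{v_1,\ldots,v_k\}$ is strictly positive, and distinct vertices of $\{v_1,\ldots,v_k\}$ have distinct distances to $S$), so at least $k$ parts separate the pair; in case (iii) every singleton $\{v_l\}$ distinguishes because $m-l\ne m'-l$ whenever $m\ne m'$, again giving $k$ parts. The only nontrivial point—and the main obstacle I anticipate—is the bookkeeping in case (ii), where one must simultaneously verify that at most one $l\in\{1,\ldots,k\}$ can satisfy $l=(i+m)/2$ and that $S$ truly compensates when such an $l$ exists; this is a routine arithmetic check rather than a conceptual difficulty. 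Combining the two bounds yields $\pd_k(P_n)=k+1$.
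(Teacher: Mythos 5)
Your proof is correct, and the lower bound is handled exactly as in the paper (via Theorem~\ref{pd_k=k}); the difference is in the upper-bound construction. The paper partitions $V(P_n)$ into $k+1$ \emph{consecutive blocks of nearly equal size} (writing $n=(k+1)q+r$ and taking $r$ blocks of size $q+1$ followed by $k+1-r$ blocks of size $q$), then argues that a pair inside one block is distinguished by the other $k$ blocks and a pair in two different blocks loses at most one further block. You instead take the highly unbalanced partition $\{v_1\},\ldots,\{v_k\},S$ with $S=\{v_{k+1},\ldots,v_n\}$, and your case analysis is sound: at most one singleton can sit at the midpoint $(a+b)/2$ of a pair, $S$ separates any pair not wholly contained in it because $d(v_a,S)=\max\{0,k+1-a\}$, and all $k$ singletons separate any pair inside $S$. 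Your version buys a more transparent verification --- the only failure mode is the single midpoint singleton, and the arithmetic is completely explicit --- and it is worth noting that it is \emph{not} merely the generic ``$k$-metric basis plus leftover class'' device of Theorem~\ref{theoRelationParDim_k}, since $\{v_1,\ldots,v_k\}$ is not a $k$-metric basis of $P_n$ (a pair $v_i,v_m$ whose midpoint lies among $v_1,\ldots,v_k$ is resolved by only $k-1$ of those vertices); the class $S$ genuinely does work in compensating. The paper's balanced construction is less sharp in its bookkeeping (a cross-block pair may lose one of the remaining $k-1$ blocks, still leaving $k$ distinguishing classes) but is the more symmetric choice. Both yield $|\Pi|=k+1$, so either completes the proof.
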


\begin{proof}
By Theorem~\ref{pd_k=k}, we have that $\pd_k(P_n)\ge k+1$. On the other hand, let $q$ and $r$ be the integers such that $n=(k+1)q+r$ and $0\le r\le k$. Namely, $q$ is the quotient and $r$ is the remainder when we divide $n$ by $k+1$. Let $\{v_1,v_2,\ldots,v_n\}$ be the vertex set of $P_n$ such that $v_i\sim v_{i+1}$ for $i\in\{1,2,\ldots,n-1\}$. For any $j\in\{1,\ldots,r\}$ we define $S_j=\{v_{(j-1)(q+1)+1},v_{(j-1)(q+1)+2},\ldots,v_{j(q+1)}\}$ and for any $j\in\{r+1,\ldots,k+1\}$ we define $S_j=\{v_{r(q+1)+\left(j-(r+1)\right)q+1},v_{r(q+1)+\left(j-(r+1)\right)q+2},\ldots,$ $v_{r(q+1)+\left(j-r\right)q}\}$. We take the set $\Pi=\{S_1,S_2,\ldots,S_{k+1}\}$ and we claim that $\Pi$ is a $k$-partition generator of $P_n$. Let $x,y$ be two vertex of $P_n$. Suppose that $x,y\in S_i$ for some $i\in\{1,\ldots,k+1\}$. By the construction of $\Pi$, we deduce that $x,y$ are distinguished by the $k$ vertex sets in $\Pi-\{S_i\}$. Suppose now that $x\in S_i$ and $y\in S_j$ such that $i\ne j$ and $i,j\in\{1,\ldots,k+1\}$. In this case there exists at most one vertex set in $\Pi-\{S_i,S_j\}$ that does not distinguish $x,y$, and as a consequence, $x,y$ are distinguished by at least $k$ vertex sets of $\Pi$. Therefore, $\Pi$ is a $k$-partition generator of $G$ which leads to $\pd_k(P_n)\le|\Pi|=k+1$.
\end{proof}

By Theorem~\ref{pd_k=k} and Proposition~\ref{pathOrderAtLeast3} along with the fact that $\pd_1(P_2)=2$, we know the $k$-partition dimension of any path $P_n$. We point out that the formula $\pd_1(P_n)=2$ was obtained previously in \cite{Chartrand2000b}.

From now on we propose an upper bound on the $k$-partition dimension of trees different from paths. Firstly, we recall that an upper bound on $k$-partition dimension of a graph was given by Theorem~\ref{theoRelationParDim_k} in terms of its $k$-metric dimension. A formula for the $k$-metric dimension of trees that are not paths has been established in \cite{Estrada-Moreno2013}. In order to present this formula, we need an additional definition along with those that were already used for the definition of the parameter $\varsigma(G)$ on page~\pageref{oddDefinition}. For any exterior major vertex $w$ that belongs the vertex set of a tree $T$ and having terminal degree greater than one, {\it i.e.}, $w\in \mathcal{M}(T)$, we define the following function for any $k\in\{1,\ldots,\varsigma(T)\}$,
\[
I_k(w)=\left\{ \begin{array}{ll}
\left(\ter(w)-1\right)\left(k-l(w)\right)+l(w), & \text{if } l(w)\le\left\lfloor\frac{k}{2}\right\rfloor,\\
& \\
\left(\ter(w)-1\right)\left\lceil\frac{k}{2}\right\rceil+\left\lfloor\frac{k}{2}\right\rfloor, & \text{otherwise.}
\end{array}
\right.
\]

We can now state the formula for the $k$-metric dimension of any tree different from path.

\begin{theorem}{\rm\cite{Estrada-Moreno2013}}\label{theoTreeDimK}
If $T$ is a tree that is not a path, then for any $k\in \{1,\ldots,\varsigma(T)\}$, $$\dim_{k}(T)=\sum_{w\in \mathcal{M}(T)}I_{k}(w).$$
\end{theorem}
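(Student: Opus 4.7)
The plan is to prove the formula by establishing matching lower and upper bounds on $\dim_k(T)$, both equal to $\sum_{w\in\mathcal{M}(T)}I_k(w)$. The critical first observation is a localisation principle: for any two vertices $x,y$ lying on distinct legs of the same $w\in\mathcal{M}(T)$ at equal distance from $w$, every vertex $z\in V(T)$ outside those two legs satisfies $d(z,x)=d(z,w)+d(w,x)=d(z,w)+d(w,y)=d(z,y)$, since any shortest path from $z$ to $x$ or $y$ must traverse $w$. Hence the vertices of any $k$-metric generator needed to distinguish pairs associated with a given $w$ can (and must) be drawn from $\bigcup_{u\in U(w)}(P(u,w)\setminus\{w\})$, and the choices at different $w\in\mathcal{M}(T)$ are independent.

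For the lower bound, fix $w\in\mathcal{M}(T)$ with terminals $u_1,\dots,u_t$ and leg lengths $l_1\le\cdots\le l_t$. Write $a_i$ for the number of generator vertices chosen on leg $i$. Using the equidistant pairs at every depth $d\in\{1,\dots,\min(l_i,l_j)\}$, I would show that the chosen vertices on each leg must be the ones closest to the terminal (otherwise a pair farther from $w$ goes undistinguished) and that $a_i+a_j\ge k$ for all $i\ne j$ whenever neither leg is fully used. Minimising $\sum_i a_i$ subject to these constraints together with $a_i\le l_i$ yields exactly $I_k(w)$, with the case split in the definition reflecting whether the shortest leg $l(w)$ is long enough to absorb $\lfloor k/2\rfloor$ vertices or instead forces the longer legs to compensate by each supplying $k-l(w)$.

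For the upper bound I would exhibit an explicit $k$-metric generator achieving $\sum_w I_k(w)$. At each $w\in\mathcal{M}(T)$ with $l_1=l(w)\le l_2\le\cdots\le l_t$, if $l_1\le\lfloor k/2\rfloor$ I take all $l_1$ non-$w$ vertices of leg $1$ together with the $k-l_1$ vertices closest to $u_i$ on each leg $i\ge 2$; otherwise I take the $\lfloor k/2\rfloor$ vertices closest to $u_1$ on leg $1$ and the $\lceil k/2\rceil$ closest to $u_i$ on each leg $i\ge 2$. The resulting set $S$ has the desired cardinality, and a direct case analysis, splitting pairs into same-leg pairs, equidistant pairs at a common $w$, and pairs whose connecting path traverses distinct exterior major vertices, verifies the $k$-metric property. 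The restriction $k\le\varsigma(T)$ is exactly what ensures that the counts $\lceil k/2\rceil$ and $k-l(w)$ never exceed the available leg lengths on the legs to which they are assigned.

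The main obstacle will be the lower bound's tightness, namely ruling out that a different distribution of generator vertices among the legs at a single $w$ could save elements. The argument must identify, for each $w$, a minimal family of equidistant pairs whose joint $k$-fold resolution forces the count $I_k(w)$, and then carefully separate the two regimes $l(w)\le\lfloor k/2\rfloor$ versus $l(w)>\lfloor k/2\rfloor$: in the former, at least one leg is too short to contribute $\lceil k/2\rceil$ of the required distinguishers and the shortfall must be borrowed from vertices past the natural midpoint on the longer legs, which is precisely the origin of the asymmetric term $(\ter(w)-1)(k-l(w))+l(w)$.
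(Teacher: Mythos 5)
The paper does not prove this statement; it is imported verbatim from \cite{Estrada-Moreno2013}, so there is no internal proof to compare against. Measured against the proof in that reference, your outline follows essentially the same route: localise equidistant pairs on distinct legs of a $w\in\mathcal{M}(T)$, derive a counting lower bound leg by leg, and exhibit an explicit generator of matching size. Two points, however, deserve correction or emphasis. First, your lower-bound claim that the generator vertices on each leg ``must be the ones closest to the terminal'' is false and, fortunately, unnecessary: if $x,y$ sit on distinct legs of $w$ at the same depth $d\ge 1$ and $z$ sits on one of those legs at depth $e\ge 1$, then $d(z,x)=|e-d|$ and $d(z,y)=e+d$ coincide only when $e=0$ or $d=0$, so \emph{every} vertex of either leg other than $w$ distinguishes the pair regardless of position. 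The constraint extracted from the deepest equidistant pair on legs $i,j$ is therefore purely the cardinality inequality $a_i+a_j\ge k$ (legitimate because $k\le\varsigma(T)\le l_i+l_j$), and minimising $\sum_i a_i$ subject to this and $a_i\le l_i$ already yields $I_k(w)$; summing over $w$ is valid because legs of distinct exterior major vertices are disjoint.

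Second, the step you defer to ``a direct case analysis'' is where the real work of the upper bound lies, and it is also the only place where the placement of the selected vertices along each leg matters. For instance, a vertex $x$ at depth $d$ on a leg of $w$ and a vertex $y$ at distance $d$ from $w$ on a non-leg branch of $w$ are \emph{not} distinguished by any chosen vertex on the other legs of $w$; one must then argue that the vertices on $x$'s own leg together with those placed at the exterior major vertices reached through $y$'s branch still supply at least $k$ distinguishers, using observations such as: a pair cannot be simultaneously equidistant from two distinct exterior major vertices, and at most one vertex of a leg (a midpoint) can fail to distinguish a same-leg pair. Your taxonomy of cases is the right one, but as written the proposal asserts rather than establishes the part of the argument that carries the burden of the proof.
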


It was shown in \cite{Yero2017Computing} that $\dim_{k}(T)$ can be computed in linear time with respect to the order of any tree $T$ different from path. On the other hand, by Theorem \ref{theoTreeDimK} it can be noted that $\dim_k(T)<n$. Therefore, according to Theorem~\ref{theoRelationParDim_k}, and considering that $\mathfrak{d}(T)=\varsigma(T)$ by Theorem~\ref{theoTreeDimensionalK}, we deduce the next upper bound on $k$-partition dimension of $T$ for any $k\in \{1,\ldots,\varsigma(T)\}$

\begin{equation}\label{trivialUpperBoundTree}
\pd_k(T)\le\dim_{k}(T)+1=\sum_{w\in \mathcal{M}(T)}I_{k}(w)+1.
\end{equation}

The previous bound is tight and it is achieved for the tree $T$ shown in Figure~\ref{exampleAchievesUpperBound}.

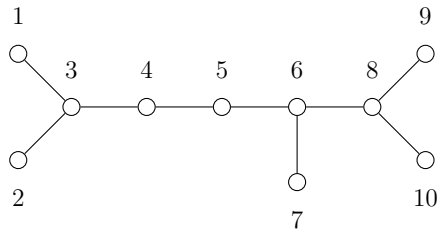
\begin{figure}[!ht]
\centering
\begin{tikzpicture}[transform shape, inner sep = .8mm]
\node [draw=black, shape=circle, fill=white] (6) at (0,0) {};
\node [scale=.8] at ([yshift=.5 cm]6) {$6$};
\node [draw=black, shape=circle, fill=white] (5) at ([xshift=-1 cm]6) {};
\node [scale=.8] at ([yshift=.5 cm]5) {$5$};
\node [draw=black, shape=circle, fill=white] (8) at ([xshift=1 cm]6) {};
\node [scale=.8] at ([yshift=.5 cm]8) {$8$};
\node [draw=black, shape=circle, fill=white] (7) at ([yshift=-1 cm]6) {};
\node [scale=.8] at ([yshift=-.5 cm]7) {$7$};
\node [draw=black, shape=circle, fill=white] (4) at ([xshift=-2 cm]6) {};
\node [scale=.8] at ([yshift=.5 cm]4) {$4$};
\node [draw=black, shape=circle, fill=white] (3) at ([xshift=-3 cm]6) {};
\node [scale=.8] at ([yshift=.5 cm]3) {$3$};
\node [draw=black, shape=circle, fill=white] (1) at ([shift=(135:1cm)]3) {};
\node [scale=.8] at ([yshift=.5 cm]1) {$1$};
\node [draw=black, shape=circle, fill=white] (2) at ([shift=(225:1cm)]3) {};
\node [scale=.8] at ([yshift=-.5 cm]2) {$2$};
\node [draw=black, shape=circle, fill=white] (9) at ([shift=(45:1cm)]8) {};
\node [scale=.8] at ([yshift=.5 cm]9) {$9$};
\node [draw=black, shape=circle, fill=white] (10) at ([shift=(-45:1cm)]8) {};
\node [scale=.8] at ([yshift=-.5 cm]10) {$10$};
\draw (1)--(3)--(4)--(5)--(6)--(8)--(9);
\draw (2)--(3);
\draw (6)--(7);
\draw (8)--(10);
\end{tikzpicture}
\caption{The set $\Pi=\left\{\{1\},\{2\}\{3,4,5,6,7,8\},\{9\},\{10\}\right\}$ is a $2$-partition basis of this $T$ and $|\Pi|=5=\dim_2(T)+1$.}
\label{exampleAchievesUpperBound}
\end{figure}

However, there are trees for which the bound \eqref{trivialUpperBoundTree} is not tight, for instance for any star graph $K_{1,n}$ such that $n\ge 3$, it follows that $\dim_2(K_{1,n})=\pd_2(K_{1,n})=n$. We now propose another upper bound that is less than or equal to given in \eqref{trivialUpperBoundTree}. To this end, we need to introduce some new definitions. We denote by 
$$\tau=\max_{w\in\mathcal{M}(T)}\{|U(w)|\}=\max_{w\in\mathcal{M}(T)}\{\ter(w)\}.$$
We also define a partition $\Pi=\{\mathcal{M}_1(T),\ldots,\mathcal{M}_r(T)\}$ of $\mathcal{M}(T)$ such that two vertices $w,w'\in\mathcal{M}_i(T)\in\Pi$ if $l(w)=l(w')=l_i$ and $l_1<l_2<\ldots<l_r$. A set $\mathcal{M}_i(T)\in\Pi$ is composed by only one vertex if and only if there exist only one vertex $w\in\mathcal{M}(T)$ such that $l(w)=l_i$. Given $k\in \{1,\ldots,\varsigma(T)\}$ we define $s_k$ as the maximum subscript such that $l_{s_k}\le\lfloor\frac{k}{2}\rfloor$ and if $l_1>\lfloor\frac{k}{2}\rfloor$, then we assume that $s_k=1$. Given $\mathcal{M}_i(T)\in\Pi$ we denote $\displaystyle t_i=\max_{w\in\mathcal{M}_i(T)}\{|U(w)|\}$. Finally, if we consider that $\displaystyle\max_{s_k<j\le r}\{t_j\}=0$ for $s_k=r$, then  we can define the following parameter: 
\begin{align*}
\mathcal{I}_k(T)=&(t_1-2)\max\left\{k-l_1,\left\lceil\frac{k}{2}\right\rceil\right\}+\sum_{i=2}^{s_k}\max\left\{t_i-\max_{1\le j<i}\{t_j\},0\right\}\left(k-l_i\right)+\\
&+\max\left\{\max_{s_k<j\le r}\{t_j\}-\max_{1\le j\le s_k}\{t_j\},0\right\}\left\lceil\frac{k}{2}\right\rceil.
\end{align*}
Figure~\ref{clarifyingExample} shows an example helping to clarify the parameter $\mathcal{I}_k(T)$. On the other hand, from Figure~\ref{exampleAchievesUpperBound} it follows that $\mathcal{M}_1(T)=\{3,8\}=\mathcal{M}(T)$, $l_1=1$, $t_1=2$, and as a consequence, $s_2=r=1$ and $\mathcal{I}_2(T)=0$.

\begin{figure}[!ht]
\centering
\begin{tikzpicture}[transform shape, inner sep = .8mm]
\node [draw=black, shape=circle, fill=white] (2) at (0,0) {};
\node [scale=.8] at ([yshift=-.5 cm]2) {$2$};
\node [draw=black, shape=circle, fill=white] (3) at ([xshift=-2 cm]2) {};
\node [scale=.8] at ([yshift=-.5 cm]3) {$3$};
\node [draw=black, shape=circle, fill=white] (1) at ([xshift=2 cm]2) {};
\draw (1)--(2)--(3);
\node [scale=.8] at ([yshift=-.5 cm]1) {$1$};
\node [draw=black, shape=circle, fill=white] (4) at ([shift=(45:sqrt(2)cm)]1) {};
\node [draw=black, shape=circle, fill=white] (5) at ([xshift=1 cm]1) {};
\foreach \x in {6,...,9}
{
\pgfmathtruncatemacro\predecessor{\x-1};
\node [draw=black, shape=circle, fill=white] (\x) at ([xshift=1 cm]\predecessor) {};
\draw (\predecessor)--(\x);
}
\node [draw=black, shape=circle, fill=white] (10) at ([shift=(-45:sqrt(2)cm)]1) {};
\foreach \x in {11,...,14}
{
\pgfmathtruncatemacro\predecessor{\x-1};
\node [draw=black, shape=circle, fill=white] (\x) at ([xshift=1 cm]\predecessor) {};
\draw (\predecessor)--(\x);
}
\foreach \x in {4,5,10}
{
\draw (1)--(\x);
}
\node [draw=black, shape=circle, fill=white] (15) at ([shift=(36:1.5cm)]2) {};
\foreach \x in {16,...,18}
{
\pgfmathtruncatemacro\predecessor{\x-1};
\node [draw=black, shape=circle, fill=white] (\x) at ([yshift=1 cm]\predecessor) {};
\draw (\predecessor)--(\x);
}
\node [draw=black, shape=circle, fill=white] (19) at ([shift=(72:.94cm)]2) {};
\foreach \x in {20,...,22}
{
\pgfmathtruncatemacro\predecessor{\x-1};
\node [draw=black, shape=circle, fill=white] (\x) at ([yshift=1 cm]\predecessor) {};
\draw (\predecessor)--(\x);
}
\node [draw=black, shape=circle, fill=white] (23) at ([shift=(108:.94cm)]2) {};
\foreach \x in {24,...,26}
{
\pgfmathtruncatemacro\predecessor{\x-1};
\node [draw=black, shape=circle, fill=white] (\x) at ([yshift=1 cm]\predecessor) {};
\draw (\predecessor)--(\x);
}
\node [draw=black, shape=circle, fill=white] (27) at ([shift=(144:1.5cm)]2) {};
\node [draw=black, shape=circle, fill=white] (28) at ([yshift=1 cm]27) {};
\draw (27)--(28);
\foreach \x in {15,19,23,27}
{
\draw (2)--(\x);
}
\node [draw=black, shape=circle, fill=white] (29) at ([shift=(120:1.93cm)]3) {};
\foreach \x in {30,31}
{
\pgfmathtruncatemacro\predecessor{\x-1};
\node [draw=black, shape=circle, fill=white] (\x) at ([xshift=-1 cm]\predecessor) {};
\draw (\predecessor)--(\x);
}
\node [draw=black, shape=circle, fill=white] (32) at ([shift=(150:1.15cm)]3) {};
\foreach \x in {33,34}
{
\pgfmathtruncatemacro\predecessor{\x-1};
\node [draw=black, shape=circle, fill=white] (\x) at ([xshift=-1 cm]\predecessor) {};
\draw (\predecessor)--(\x);
}
\node [draw=black, shape=circle, fill=white] (35) at ([shift=(180:1cm)]3) {};
\foreach \x in {36,37}
{
\pgfmathtruncatemacro\predecessor{\x-1};
\node [draw=black, shape=circle, fill=white] (\x) at ([xshift=-1 cm]\predecessor) {};
\draw (\predecessor)--(\x);
}
\node [draw=black, shape=circle, fill=white] (38) at ([shift=(210:1.15cm)]3) {};
\foreach \x in {39,40}
{
\pgfmathtruncatemacro\predecessor{\x-1};
\node [draw=black, shape=circle, fill=white] (\x) at ([xshift=-1 cm]\predecessor) {};
\draw (\predecessor)--(\x);
}
\node [draw=black, shape=circle, fill=white] (41) at ([shift=(240:1.93cm)]3) {};
\foreach \x in {42,43}
{
\pgfmathtruncatemacro\predecessor{\x-1};
\node [draw=black, shape=circle, fill=white] (\x) at ([xshift=-1 cm]\predecessor) {};
\draw (\predecessor)--(\x);
}
\foreach \x in {29,32,35,38,41}
{
\draw (3)--(\x);
}
\end{tikzpicture}
\caption{In this tree $T$ we have that $\mathcal{M}_i(T)=\{i\}$ for $i\in\{1,2,3\}$, $l_1=1$, $l_2=2$, $l_3=3$, $t_1=3$, $t_2=4$, $t_3=5$. For $k=6$ it follows that $s_6=3$ and $\mathcal{I}_6(T)=12$.}
\label{clarifyingExample}
\end{figure}
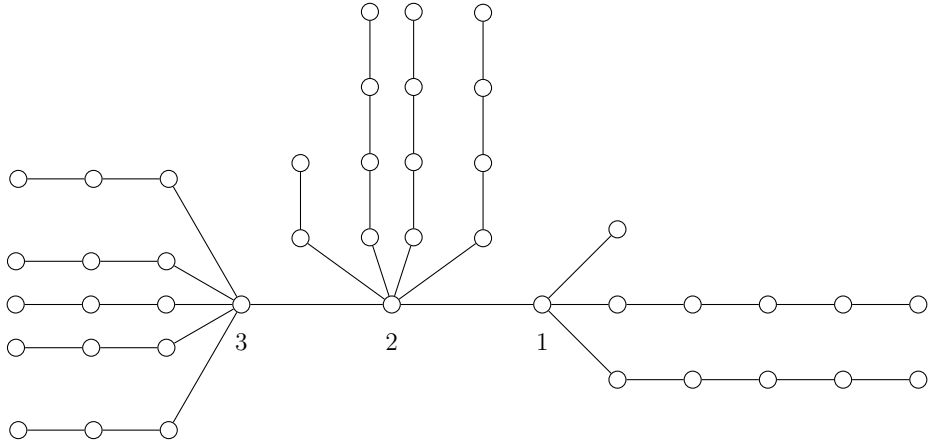

With this notation in mind we can present the next result.

\begin{theorem}\label{theoUpperBoundTree}
Let $T=(V,E)$ be a tree which is not a path. For $k\in \{1,\ldots, \varsigma(T)\}$ we have that, 
$$\pd_k(T)\le k\kappa+\mathcal{I}_k(T)+1,$$
where $\kappa=|\mathcal{M}(T)|$.
\end{theorem}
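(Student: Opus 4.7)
The plan is to construct, for each $k\in\{1,\ldots,\varsigma(T)\}$, an explicit $k$-partition generator $\Pi$ of $T$ with $|\Pi|=k\kappa+\mathcal{I}_k(T)+1$, so that $\pd_k(T)\le|\Pi|$ follows by definition. The key idea is to refine the trivial bound~\eqref{trivialUpperBoundTree} by sharing most of the ``extra'' singletons of the underlying $k$-metric basis of $T$ across different exterior major vertices, while keeping a ``baseline'' of exactly $k$ unmerged singletons per major vertex to serve as anchor distinguishers.

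To set things up, I would first enumerate $\mathcal{M}(T)=\mathcal{M}_1(T)\cup\cdots\cup\mathcal{M}_r(T)$ with $l_1<l_2<\cdots<l_r$ and, for each $w\in\mathcal{M}(T)$, rank its terminal paths as $P_1^w,\ldots,P_{\ter(w)}^w$ so that $P_1^w$ has length $l(w)$ and the remaining lengths are non-decreasing. For each $w$ I would then reserve a baseline of $k$ singleton parts: the $\min\{l(w),\lfloor k/2\rfloor\}$ vertices closest to the terminal on $P_1^w$, together with the $\max\{k-l(w),\lceil k/2\rceil\}$ vertices closest to the terminal on $P_2^w$. This sums to exactly $k$ per major vertex, contributing $k\kappa$ parts overall.

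Next I would process the classes in order and handle the ``extras'' on paths $P_3^w,\ldots,P_{\ter(w)}^w$ by assigning each depth-$m$ vertex of each rank-$j$ path (with $j\ge 3$) to a part indexed by the pair $(j,m)$ alone: two such vertices from different major vertices with matching $(j,m)$ land in the same part, and a genuinely new part is opened only when the coordinate $(j,m)$ has not yet appeared in the processing. The admissible depth $m$ ranges up to $k-l_i$ when $l_i\le\lfloor k/2\rfloor$ and up to $\lceil k/2\rceil$ otherwise. A direct accounting matches the three terms of $\mathcal{I}_k(T)$ to the new parts opened this way: the first term counts the $(t_1-2)\max\{k-l_1,\lceil k/2\rceil\}$ new parts needed by the excess paths of $\mathcal{M}_1(T)$; the middle sum counts, for each class $i\in\{2,\ldots,s_k\}$, the $\max\{t_i-\max_{j<i}t_j,0\}$ incremental new paths at depth $k-l_i$; and the final term handles the classes $i>s_k$ at depth $\lceil k/2\rceil$. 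All vertices of $T$ not yet placed form one big part $S_0$, contributing the final $+1$.

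The verification that $\Pi$ is a $k$-partition generator would proceed by a case analysis on a distinct pair $x,y\in V(T)$. Pairs in different parts are distinguished by those two parts together with plenty of nearby baseline singletons. The decisive case is a pair lying in a common merged extra part: then $x$ sits on the rank-$j$ terminal path of some $w\in\mathcal{M}(T)$ and $y$ on the rank-$j$ terminal path of some $w'\ne w$ at the same depth $m\ge 1$. For any baseline singleton $\{z\}$ of $w$, the vertex $z$ lies on a rank-$1$ or rank-$2$ terminal path of $w$, hence on a different path from $x$; the unique $T$-path from $y$ to $z$ therefore traverses $w$, yielding $d(y,z)=d(x,z)+d(w,w')>d(x,z)$. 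Symmetrically the $k$ baselines of $w'$ distinguish the pair, giving $2k\ge k$ distinguishers. The main technical obstacle I expect is making this distance argument uniform across all configurations (merged parts spanning more than two classes, cases where the relevant paths have different lengths, the boundary $l_i=\lfloor k/2\rfloor$ between the second and third terms of $\mathcal{I}_k(T)$) and checking the count bookkeeping matches $\mathcal{I}_k(T)$ exactly; the hypothesis $k\le\varsigma(T)$ is crucial throughout to guarantee that every terminal path involved is long enough to accommodate the prescribed baseline and extra singletons.
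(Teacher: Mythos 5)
Your construction is essentially the paper's own: the same $k$ baseline parts per vertex of $\mathcal{M}(T)$ taken from its two shortest legs with the same $\min\{l(w),\lfloor k/2\rfloor\}$ / $\max\{k-l(w),\lceil k/2\rceil\}$ split, the same cross-vertex merging of the parts on the legs of rank $j\ge 3$ whose total count is $\mathcal{I}_k(T)$, and one residual part, verified by the same case analysis (the choice of which end of a leg carries the singletons, and whether leftovers form per-leg tail blocks or join the residual part, is immaterial). The one caveat is that the case you single out as decisive (a merged part shared between two major vertices) is in fact among the easy ones, whereas the case that actually forces that split of the baseline --- two vertices on different legs of the \emph{same} $w\in\mathcal{M}(T)$ equidistant from $w$, which are then distinguished only by the parts lying on those two particular legs, so that the counts on the two legs must sum to at least $k$ --- is the one your sketch passes over with ``plenty of nearby baseline singletons''.
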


\begin{proof}
If $k=1$, then $s_1=1$ and we deduce that $\mathcal{I}_1(T)=\tau - 2$, and as a consequence, Theorem~\ref{theoUpperBoundTree} leads to $\pd_1\le\kappa+\tau-1$. This result was previously obtained in \cite{RodriguezVelazquez2014}. Hence from now we consider that $k\ge 2$.

Since $T$ is not a path, $T$ contains at least one vertex $w_i$ belonging to $\mathcal{M}(T)=\{w_1,\ldots,w_{\kappa}\}$. We suppose $u_{1}^i$ is a terminal vertex of $w_i\in\mathcal{M}(T)$ such that $l(u_{1}^i,w_i)=l(w_i)$. Let $U(w_i)=\{u_1^i,u_{2}^i,\ldots,u_{r_i}^i\}$ be the set of terminal vertices of $w_i$. Now, for every $u_{j}^i\in U(w_i)$, let the path $P(w_i,u_{j}^i)=w_iv^1_{j}v^2_{j}\cdots v_{j}^{d(w_i,u_j^i)-1}u_{j}^i$ and we consider the set $S_{ij}=V\left(P(w_i,u_{j}^i)\right)-\{w_i\}$. Given $k\in \{1,\ldots, \varsigma(T)\}$, we define the partition $A_{ij}$ on $S_{ij}$ in the following way. If $l(w_i)\le\lfloor\frac{k}{2}\rfloor$, then $A_{i1}=\left\{A_{i1}^1,\ldots,A_{i1}^{l(w_i)}\right\}$ and $A_{ij}=\left\{A_{ij}^1,\ldots,A_{ij}^{k-l(w_i)}\right\}$, where $j\ne 1$, $A_{i1}^l=\left\{v^l_{1}\right\}$ ($1\le l\le l(w_i)-1$), $A_{i1}^{l(w_i)}=\left\{u_1^i\right\}$, $A_{ij}^l=\left\{v^l_{j}\right\}$ ($1\le l\le k-l(w_i)-1$) and $A_{ij}^{k-l(w_i)}=S_{ij}-\bigcup_{l=1}^{k-l(w_i)-1}v_j^l$. On the other hand, if $l(w_i)>\lfloor\frac{k}{2}\rfloor$, then $A_{i1}=\left\{A_{i1}^1,\ldots,A_{i1}^{\lfloor\frac{k}{2}\rfloor}\right\}$ and $A_{ij}=\left\{A_{ij}^1,\ldots,A_{ij}^{\lceil\frac{k}{2}\rceil}\right\}$, where $j\ne 1$, $A_{i1}^l=\left\{v^l_{1}\right\}$ ($1\le l\le \lfloor\frac{k}{2}\rfloor-1$), $A_{i1}^{\lfloor\frac{k}{2}\rfloor}=S_{i1}-\bigcup_{l=1}^{\lfloor\frac{k}{2}\rfloor-1}v_1^l$, $A_{ij}^l=\left\{v^l_{j}\right\}$ ($1\le l\le\lceil\frac{k}{2}\rceil-1$) and $A_{ij}^{\lceil\frac{k}{2}\rceil}=S_{ij}-\bigcup_{l=1}^{\lceil\frac{k}{2}\rceil-1}v_j^l$. In other words, each partition $A_{ij}$ on $S_{ij}$ it defines as follows

$$
A_{i1}=\left\{
\begin{array}{ll}
\left\{\left\{v^1_{1}\right\},\ldots,\left\{v^{l(w_i)-1}_{1}\right\},\left\{u_1^i\right\}\right\}, & \text{if } l(w_i)\le\lfloor\frac{k}{2}\rfloor 
\\
\\
\left\{\left\{v^1_{1}\right\},\ldots,\left\{v^{\lfloor\frac{k}{2}\rfloor-1}_{1}\right\},S_{i1}-\bigcup_{l=1}^{\lfloor\frac{k}{2}\rfloor-1}v_1^l\right\}, & \text{if } l(w_i)>\lfloor\frac{k}{2}\rfloor. 
\end{array}
\right.
$$
and for $j\ne 1$,
$$
A_{ij}=\left\{
\begin{array}{ll}
\left\{\left\{v^1_{j}\right\},\ldots,\left\{v^{k-l(w_i)-1}_{j}\right\},S_{ij}-\bigcup_{l=1}^{k-l(w_i)-1}v_j^l\right\}, & \text{if } l(w_i)\le\lfloor\frac{k}{2}\rfloor,\\
\\
\left\{\left\{v^1_{j}\right\},\ldots,\left\{v^{\lceil\frac{k}{2}\rceil-1}_{j}\right\}, S_{ij}-\bigcup_{l=1}^{\lceil\frac{k}{2}\rceil-1}v_j^l\right\}, & \text{if } l(w_i)>\lfloor\frac{k}{2}\rfloor.
\end{array}
\right.
$$
We consider that $A_{ij}^l=\emptyset$ for either $j>|U(w_i)|=\ter(w_i)$ or $1<j\le |U(w_i)|=\ter(w_i)$ and $l>\max\left\{k-l(w_i),\left\lceil\frac{k}{2}\right\rceil\right\}$. Let us show that
$$
\Pi=\left\{
\begin{array}{ll}
A\cup C&\text{if } \tau=2\\
A\cup B\cup C&\text{otherwise,}
\end{array}
\right.
$$
is a $k$-partition generator of $T$, where 
$$A=\bigcup_{i=1}^\kappa\left(\bigcup_{l=1}^{\min\left\{l(w_i),\left\lfloor\frac{k}{2}\right\rfloor\right\}}\{A_{i1}^l\}\cup\bigcup_{l=1}^{\max\left\{k-l(w_i),\left\lceil\frac{k}{2}\right\rceil\right\}}\{A_{i2}^l\}\right),$$
$$B=\bigcup_{j=3}^\tau\bigcup_{l=1}^{\max_{1\le i\le\kappa}\left\{|A_{ij}|\right\}}\{B_j^l\},$$
$$B_j^l=\bigcup_{i=1}^\kappa A_{ij}^l,$$
and finally,
$$
C=\left\{
\begin{array}{ll}
V-\bigcup_{i=1}^\kappa\left(\bigcup_{l=1}^{\min\left\{l(w_i),\left\lfloor\frac{k}{2}\right\rfloor\right\}}A_{i1}^l\cup\bigcup_{l=1}^{\max\left\{k-l(w_i),\left\lceil\frac{k}{2}\right\rceil\right\}}A_{i2}^l\right)&\text{if } \tau=2,\\
&\\
V-\bigcup_{i=1}^\kappa\left(\bigcup_{l=1}^{\min\left\{l(w_i),\left\lfloor\frac{k}{2}\right\rfloor\right\}}A_{i1}^l\cup\bigcup_{l=1}^{\max\left\{k-l(w_i),\left\lceil\frac{k}{2}\right\rceil\right\}}A_{i2}^l\right)-&\text{otherwise.}\\
-\bigcup_{j=3}^\tau\bigcup_{l=1}^{\max_{1\le i\le\kappa}\left\{|A_{ij}|\right\}} B_j^l&
\end{array}
\right.
$$
It can be noted that $|A|=k\kappa$, and if $\tau\ge 3$, then $|B|=\mathcal{I}_k(T)$ and otherwise $\mathcal{I}_k(T)=0$. Thus, $|\Pi|=k\kappa+\mathcal{I}_k(T)+1$. We consider two different vertices $x,y\in V$.
\begin{enumerate}[{Case} 1:]
\item $x\in A_{ij}^l$ and $y\in A_{ij'}^{l'}$. If $w_i$ distinguishes $x,y$, then at least $k$ of the $k+1$ vertex sets in $A_{i1}\cup A_{i2}\cup C$ distinguish $x,y$. Suppose that $w_i$ does not distinguish $x,y$. In this case $j\ne j'$. If $j,j'\ge 3$, then $x,y$ are distinguished by all vertex sets in $\displaystyle\bigcup_{l=1}^{|A_{ij}|}\{B_j^l\}\cup \bigcup_{l=1}^{|A_{ij'}|}\{B_{j'}^l\}$. If $j<3$ and $j'\ge 3$, then all vertex sets in $\displaystyle A_{ij}\cup \bigcup_{l=1}^{|A_{ij'}|}\{B_{j'}^l\}$ distinguish $x,y$. By last, if $j,j'<3$, then $x,y$ are distinguished by all vertex sets in $A_{i1}\cup A_{i2}$. In any case, $x,y$ are distinguished by at least $k$ vertex sets of $\Pi$.
\item $x\in A_{ij}^l$, $y\in A_{i'j'}^{l'}$ and $i\ne i'$. In this case $w_i$ or $w_{i'}$ distinguish $x,y$, say $w_i$. Thus, $x,y$ are distinguished by the $k$ vertex sets in $A_{i1}\cup A_{i2}$.
\item $x\in A_{ij}^l$ and $y\in C$. If $|C|=1$, then $i=1$, $\{y\}=\{w_1\}=\mathcal{M}(T)$ and since $T$ is not a path, $y$ has at least three terminal vertex, \emph{i.e.}, $\tau=\ter(y)\ge 3$. In this case $x,y$ are distinguished by all vertex sets in $\displaystyle\bigcup_{r=1,r\ne j}^\tau A_{1r}\subseteq\Pi$. Suppose that $|C|\ge 2$. In this case there exists another vertex $w_j\in\mathcal{M}(T)$ different from $w_i$ such that $y$ belongs to the path from $w_j$ to $w_i$. Thus $x,y$ is distinguished by all vertex sets in $A_{j1}\cup A_{j2}$. In any case, $x,y$ are distinguished by at least $k$ vertex sets of $\Pi$.
\item $x,y\in C$. In this case there exists $w_i\in\mathcal{M}(T)$ such that $d(x,w_i)<d(y,w_i)$. Hence $x,y$ is distinguished by the $k$ vertex sets in $A_{i1}\cup A_{i2}$.
\end{enumerate}
Therefore, $\Pi$ is a $k$-partition generator of $T$, and as a consequence, $\pd_k(T)\le |\Pi|=k\kappa+\mathcal{I}_k(T)+1$.
\end{proof}

Two interesting particular cases are the following ones for $k=2$ and $k=3$, respectively. For these two cases $\mathcal{I}_2(T)=\tau - 2$ and $\mathcal{I}_3(T)=2\tau - 4$. Therefore, by Theorem~\ref{theoUpperBoundTree} we have the next results.

\begin{corollary}
If $T$ is a tree different from a path, then
$$\pd_2(T)\le 2\kappa+\tau-1,$$
where $\kappa=|\mathcal{M}(T)|$.
\end{corollary}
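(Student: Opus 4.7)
The plan is to obtain the corollary as a direct specialization of Theorem~\ref{theoUpperBoundTree} at $k=2$. It suffices to show that $\mathcal{I}_2(T)=\tau-2$, since then the theorem gives $\pd_2(T)\le 2\kappa+(\tau-2)+1=2\kappa+\tau-1$. Note first that Theorem~\ref{theoUpperBoundTree} applies, because $T$ is not a path so $\mathcal{M}(T)\ne\emptyset$ and hence $\varsigma(T)\ge 2$.

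First I would record a simple structural observation. For any $w\in\mathcal{M}(T)$, every terminal vertex of $w$ is distinct from $w$, so $l(w)\ge 1$; in particular $l_1\ge 1$. Combined with the strict chain $l_1<l_2<\cdots<l_r$, this forces $s_2=1$: if $l_1=1$ then $l_2\ge 2>\lfloor 2/2\rfloor$, while if $l_1>1$ then $s_2=1$ by the stated convention.

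Next I would evaluate the three pieces of $\mathcal{I}_2(T)$. Since $s_2=1$, the middle summation $\sum_{i=2}^{s_2}$ is empty. In the first piece, $\max\{2-l_1,\lceil 2/2\rceil\}=\max\{2-l_1,1\}=1$ because $l_1\ge 1$, contributing $t_1-2$. In the last piece, $\lceil 2/2\rceil=1$ and it becomes $\max\{M-t_1,0\}$, where $M=\max_{s_2<j\le r}\{t_j\}$, interpreted as $0$ if $s_2=r$. Splitting on cases: if $\tau=t_1$ (which includes $s_2=r$), the last piece is $0$ and $\mathcal{I}_2(T)=t_1-2=\tau-2$; otherwise $\tau=M>t_1$ and $\mathcal{I}_2(T)=(t_1-2)+(M-t_1)=\tau-2$. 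Either way, $\mathcal{I}_2(T)=\tau-2$.

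There is no substantive obstacle here; the only thing to be careful about is the piecewise definition of $\mathcal{I}_k(T)$ together with the convention $s_k=1$ when $l_1>\lfloor k/2\rfloor$ and the edge convention $\max_{s_k<j\le r}\{t_j\}=0$ when $s_k=r$. Once $\mathcal{I}_2(T)=\tau-2$ is verified, Theorem~\ref{theoUpperBoundTree} yields the claimed bound $\pd_2(T)\le 2\kappa+\tau-1$.
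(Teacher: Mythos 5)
Your proposal is correct and follows the same route as the paper: the corollary is obtained by specializing Theorem~\ref{theoUpperBoundTree} at $k=2$ after checking that $\mathcal{I}_2(T)=\tau-2$, which the paper simply asserts and you verify in detail (correctly handling the conventions $s_2=1$ and $\max_{s_k<j\le r}\{t_j\}=0$ when $s_k=r$). No gaps.
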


\begin{corollary}
If $T$ is a tree different from a path such that $ \varsigma(T)\ge 3$, then
$$\pd_3(T)\le 3\kappa+2\tau-3,$$
where $\kappa=|\mathcal{M}(T)|$.
\end{corollary}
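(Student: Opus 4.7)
The plan is to derive this as an immediate specialization of Theorem~\ref{theoUpperBoundTree} with $k=3$, reducing everything to the evaluation of $\mathcal{I}_3(T)$. Since Theorem~\ref{theoUpperBoundTree} gives $\pd_k(T)\le k\kappa+\mathcal{I}_k(T)+1$, it suffices to show that $\mathcal{I}_3(T)=2\tau-4$, whence $\pd_3(T)\le 3\kappa + 2\tau - 4 + 1 = 3\kappa+2\tau-3$.

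The first thing I would do is pin down the value of the index $s_3$. With $k=3$ we have $\lfloor k/2\rfloor=1$ and $\lceil k/2\rceil=2$. The values $l_1<l_2<\cdots<l_r$ are strictly increasing positive integers, so $l_i\ge i$; in particular $l_2\ge 2>1=\lfloor 3/2\rfloor$. Hence if $l_1=1$ we get $s_3=1$ because $l_2>\lfloor 3/2\rfloor$, and if $l_1>1=\lfloor 3/2\rfloor$ the convention stated before the definition of $\mathcal{I}_k$ again forces $s_3=1$. So $s_3=1$ unconditionally, and the middle sum $\sum_{i=2}^{s_3}$ in the definition of $\mathcal{I}_k(T)$ is empty.

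Next I would evaluate the first term. Since $l_1\ge 1$, we have $3-l_1\le 2=\lceil 3/2\rceil$, so $\max\{k-l_1,\lceil k/2\rceil\}=2$, and the first term equals $2(t_1-2)=2t_1-4$. For the third term, with $s_3=1$ the expression reduces to $\max\bigl\{\max_{j>1}\{t_j\}-t_1,\,0\bigr\}\cdot 2$. Writing $\tau=\max_{1\le i\le r}t_i$, a short case split finishes the computation: if $\tau=t_1$ the third term vanishes and $\mathcal{I}_3(T)=2t_1-4=2\tau-4$; if $\tau>t_1$ then $\tau=\max_{j>1}t_j$ and the third term equals $2(\tau-t_1)$, which added to $2t_1-4$ again gives $2\tau-4$. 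Combining the three terms yields $\mathcal{I}_3(T)=2\tau-4$, and substituting back into Theorem~\ref{theoUpperBoundTree} delivers the announced bound.

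The step I expect to require the most care is the verification that $s_3=1$ in every case, because this relies on correctly invoking the convention $s_k=1$ when $l_1>\lfloor k/2\rfloor$ alongside the strict monotonicity of the $l_i$; once this is settled, the remainder of the argument is a short, bookkeeping-level computation whose only subtlety is the sign of $\tau-t_1$ in the third summand of $\mathcal{I}_3(T)$.
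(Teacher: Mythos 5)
Your proposal is correct and follows exactly the paper's route: the paper likewise obtains this corollary by specializing Theorem~\ref{theoUpperBoundTree} to $k=3$ after asserting $\mathcal{I}_3(T)=2\tau-4$, and your case analysis (forcing $s_3=1$ via the strict monotonicity of the $l_i$, evaluating the first and third summands, and splitting on whether $\tau=t_1$) correctly supplies the computation the paper leaves implicit.
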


For tree $T$ shown in Figure~\ref{clarifyingExample} the upper bound  on $\pd_6(T)$ given by Theorem~\ref{theoUpperBoundTree} is notably better than upper bound \eqref{trivialUpperBoundTree}. In this case $\pd_6(T)\le 3\cdot 6+12+1=31<41=11+14+15+1=\dim_6(T)+1$. However, if we consider the tree $T$ depicted in Figure~\ref{exampleAchievesUpperBound}, then these two upper bounds on $\pd_2(T)$ are the same and equal to $2$-metric dimension of $T$ increased in one.

\section*{Acknowledgement.}
We are very grateful to the anonymous referees for the evaluation of our paper and for the constructive criticism. This work has been partially supported by the ``Ministerio de Econom{\'i}a y Competitividad" (TRA2013-48180-C3-P and TRA2015-71883-REDT).

\end{document}